\documentclass[12 pt,leqno]{article}

\usepackage{amsmath}
\usepackage{amssymb}
\usepackage[english]{babel}
\usepackage{amsthm}
\usepackage[latin1]{inputenc}
\usepackage{hyperref}
\usepackage{graphicx}
\usepackage{subfigure}
\usepackage{makeidx}

\newtheorem{teo}{Theorem}
\newtheorem{lemma}{Lemma}
\newtheorem{prop}{Proposition}
\newtheorem{defin}{Definition}
\newtheorem{cor}{Corollary}
\newtheorem{remark}{Remark}

\newenvironment{sistema}%
{\left\lbrace\begin{array}{@{}l@{}}}%
{\end{array}\right.}

{\left( \begin{array}{@{}l@{}}}%
{\end{array}\right)}

\begin{document}

\title{\textbf{Periodic Solutions of a Forced Relativistic Pendulum via Twist Dynamics}}
\author{\textbf{Stefano Marò} \\
\textit{\small{Dipartimento di Matematica - Università di Torino}}\\
\textit{\small{Via Carlo Alberto 10, 10123 Torino - Italy}}\\
\textit{\small{e-mail: stefano.maro@unito.it}}
}
\date{}

\maketitle

\begin{abstract}
\noindent We prove the existence of at least two geometrically different periodic solution with winding number $N$ for the forced relativistic pendulum. The instability of a solution is also proved. The proof is topological and based on the version of the Poincaré-Birkhoff theorem by Franks. Moreover, with some restriction on the parameters, we prove the existence of twist dynamics.
\end{abstract}

\section{Introduction and statement of the main results}

\bigskip

\noindent We are concerned with the equation of the forced relativistic pendulum
\begin{equation}\label{introeq}
\frac{d}{dt} \Bigl(  \frac{\dot x}{\sqrt{1-{\dot x}^2}}   \Bigr) + a \sin x = f(t),
\end{equation}
where $a$ is a positive real constant and $f$ is a $T$-periodic real function with mean value zero over a period. 

\smallskip

\noindent Results on the existence of solutions to (\ref{introeq}) and on their qualitative properties have been recently given (by different methods) by various authors. 
We refer to the works of Brezis-Mawhin \cite{brezismawhin} where it is proved the existence of a $T$-periodic solution and of Bereanu-Torres \cite{bereanutorres} who provided the existence of a second solution not differing from the previous by a multiple of $2\pi$. From now on, multiplicity has to be understood in this way. These works use very delicate variational techniques. As for topological methods we refer to Bereanu-Jebelean-Mawhin \cite{bereanujebeleanmawhin} where it is proved the existence of two $T$-periodic solutions using degree arguments and upper and lower solutions. In this case, a restriction on the parameters $a$ and $T$ is required.

\smallskip

\noindent Another interesting topic concerning (\ref{introeq}) is the question of the stability of the solutions; in this direction, Chu-Lei-Zhang \cite{chuleizhang} found stable $T$-periodic solutions for the related case of a relativistic pendulum with variable length using KAM theory and Birkhoff normal form. 

\smallskip

\noindent Our contribution in the present paper contains two main results (Theorem \ref{introma} and Theorem \ref{twist} below) which we describe in what follows.

\smallskip

\noindent Our first result deals with the existence of $T$-periodic solutions with winding number $N$, i.e. solutions $x(\cdot)$ such that $x(t +T)= x(t)+2N \pi, N \in \mathbb{Z}$  for all $t \in\mathbb{R}$. Obviously,  $T$-periodic solutions correspond to the case $N = 0$. We first show (Proposition \ref{neces}) that the condition

\begin{equation}\label{main} 
\Bigl|\frac{2N\pi}{T}\Bigr|<1
\end{equation} 
is necessary for the existence of $T$-periodic solutions with winding number $N$; this agrees with physical intuition, which suggests (due to the bound arising from the speed of light) that we cannot expect solutions for every value of $N$ and $T$. The necessary condition is also sufficient; in fact we prove

\begin{teo}\label{introma}
For every $N \in \mathbb{Z}$ such that $|\frac{2N\pi}{T}|<1$ there exist at least two solutions such that $x(t + T) = x(t)+2N \pi, N \in \mathbb{Z}$  for all $t \in \mathbb{R}$.  Moreover, at least one of them is unstable.
\end{teo}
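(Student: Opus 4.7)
First I would rewrite \eqref{introeq} as a planar non-autonomous Hamiltonian system. Setting $y=\dot x/\sqrt{1-\dot x^2}$ inverts to $\dot x=y/\sqrt{1+y^2}$, and the equation becomes
\begin{equation*}
\dot x=\frac{y}{\sqrt{1+y^2}},\qquad \dot y=-a\sin x+f(t),
\end{equation*}
which is Hamiltonian with $H(t,x,y)=\sqrt{1+y^2}-a\cos x-f(t)\,x$ (on the cylinder, one replaces the last term by an equivalent $2\pi$-periodic expression after integrating against $f$). Since $|\dot x|<1$ automatically and $|\dot y|\le a+\|f\|_\infty$, solutions are globally defined. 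This gives a Poincaré map $P:\mathbb R^2\to\mathbb R^2$ that is an area-preserving diffeomorphism, commutes with the deck transformation $(x,y)\mapsto(x+2\pi,y)$, and hence descends to an area-preserving diffeomorphism of the cylinder $\mathbb T\times\mathbb R$. A $T$-periodic solution with winding number $N$ corresponds exactly to a fixed point of the lift $\widetilde P$ translated by $(2N\pi,0)$, i.e.\ to $\widetilde P(x,y)-(2N\pi,0)=(x,y)$.

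Second, I would establish the twist behaviour at $y=\pm\infty$. Given an initial datum $(x_0,y_0)$, the bound $|\dot y|\le a+\|f\|_\infty=:C$ forces $|y(t)-y_0|\le CT$ on $[0,T]$, so for $|y_0|$ sufficiently large $y(t)$ keeps the sign of $y_0$. Integrating $\dot x=y/\sqrt{1+y^2}$ then yields
\begin{equation*}
x(T)-x_0 \;\longrightarrow\;\pm T \quad\text{as }y_0\to\pm\infty,
\end{equation*}
uniformly in $x_0$. The hypothesis $|2N\pi/T|<1$ is equivalent to $-T<2N\pi<T$, so one can choose $y^->0>$ \ldots, more precisely $y^+\gg 0$ and $y^-\ll 0$ such that on the line $\{y=y^+\}$ one has $x(T)-x_0>2N\pi$ while on $\{y=y^-\}$ one has $x(T)-x_0<2N\pi$ (the strict inequalities swap if $N<0$, but the argument is identical). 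Equivalently, $\widetilde P-(2N\pi,0)$ rotates the two boundaries of the strip $\{y^-\le y\le y^+\}$ in opposite senses.

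Third, I would invoke Franks' version of the Poincar\'e--Birkhoff theorem for area-preserving homeomorphisms of the open annulus (or the cylinder with rotation numbers of opposite sign at the two ends). The map $\widetilde P-(2N\pi,0)$ is area-preserving, isotopic to the identity, and the previous step provides two essential curves with opposite rotation; Franks' theorem then yields at least two geometrically distinct fixed points inside the strip, which translate back to two $T$-periodic solutions of \eqref{introeq} with winding number $N$ that do not differ by a multiple of $2\pi$.

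Finally, for instability I would use the index count that comes with Franks' theorem: the sum of the fixed-point indices of $\widetilde P-(2N\pi,0)$ on the cylinder vanishes (by Lefschetz, since the cylinder has zero Euler characteristic and the map is homotopic to the identity after the twist condition is fixed), so the two fixed points cannot both carry index $+1$. At least one has non-positive index, hence cannot be a Lyapunov-stable fixed point of an area-preserving smooth map (a stable fixed point of such a map in the plane has index $+1$). Translating back to the differential equation, this fixed point corresponds to an unstable $T$-periodic solution of winding $N$.

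\textbf{Expected main obstacle.} The analytic steps (global existence, area preservation, asymptotic rotation at $y=\pm\infty$) are essentially direct. The delicate point is applying Franks' theorem cleanly: one must verify the precise hypotheses (isotopy, area preservation, appropriate boundary/infinity behaviour on the open annulus), and, for the instability part, rule out the degenerate situation of a continuum of periodic solutions (where the index argument would need to be replaced by the observation that non-isolated fixed points are automatically unstable in the Lyapunov sense for area-preserving maps).
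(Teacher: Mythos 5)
Your plan follows the same route as the paper: pass to the Hamiltonian form via the Legendre transform to remove the singularity and get global existence, derive a boundary twist condition from the saturation of $\dot x$ at $\pm 1$ together with $|2N\pi/T|<1$, apply Franks' version of the Poincar\'e--Birkhoff theorem, and obtain instability from an index count (zero Euler characteristic of the annulus) combined with the fact that a stable fixed point of an orientation- and area-preserving planar map has index $+1$. Working with $\widetilde P-(2N\pi,0)$ instead of first substituting $y=x-\frac{2N\pi}{T}t$, as the paper does, is an immaterial difference.

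There is, however, one genuine gap: in the non-invariant setting (the image of the strip is not the strip itself), \emph{area preservation of the cylinder map is not a sufficient hypothesis} for a Poincar\'e--Birkhoff type conclusion. The map $(\theta,r)\mapsto(\theta+\arctan r,\,r+1)$ preserves area, is isotopic to the identity, and satisfies your boundary twist condition on every strip $\{|r|\le R\}$, yet it has no fixed points, because its flux across a non-contractible circle is nonzero. The hypothesis that actually makes Theorem \ref{franks} work is that the Poincar\'e map be \emph{exact symplectic} on the cylinder, and checking this is exactly where the assumption $\int_0^T f=0$ enters: the generating function $V(\theta,r)=\int_0^T(p\dot q-H)\,dt$ satisfies $V(\theta+2\pi,r)-V(\theta,r)=2\pi\int_0^T f(t)\,dt$, which vanishes only because $f$ has zero mean (Lemma \ref{l6} of the paper). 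Your parenthetical remark about replacing $-f(t)x$ by a ``periodic expression'' does not resolve this: the Hamiltonian vector field is $2\pi$-periodic in $x$ in any case, but the flux of the time-$T$ map need not vanish, and your proposal never uses the zero-mean hypothesis, which cannot be dispensed with. A smaller second point: non-isolated fixed points of an area-preserving map are \emph{not} automatically Lyapunov-unstable (the identity is a counterexample); the degenerate case is handled in the paper by the analyticity of the Poincar\'e map, the local structure of zero sets of analytic functions, and Ortega's dichotomy (near a stable fixed point either $S=\mathrm{Id}$ or there are Jordan curves of index $1$ avoiding $\mathrm{Fix}(S)$ accumulating at the point).
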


\noindent The crucial fact is the passage from the Lagrangian to the Hamiltonian form performed by the Legendre transform. This eliminates the singularity and allows us to obtain solutions as fixed points of the Poincar\'e map associated to the planar Hamiltonian system corresponding to (\ref{introeq}). If we consider the case $N=0$ we obtain 
\begin{equation}\label{hintr}
\dot{q}=H_p,\quad \dot{p}=-H_q
\end{equation} 
where $H=\sqrt{p^2+1} - a\cos q - f(t)q$. This transform has been used by Mawhin in \cite{mawcime} for other purposes. More precisely, in Section 2 we first give a version of the Poincar\'e-Birkhoff theorem (Theorem \ref{franks}) which is a variant of the result by Franks \cite{franks}. Then, in Section 3 we show (Lemma \ref{l6} and Lemma \ref{l4}) that under (\ref{main}) the assumptions of Theorem \ref{franks} are satisfied. The proof of Theorem 4.2 in \cite{franks} is developed by elaborated techniques from differential geometry, while the proof of our Theorem \ref{franks} requires more elementary techniques based on the application of a result of Le Calvez-Wang \cite{lecalvez} (Theorem \ref{pbstrip}). The crucial notion for our argument is that of exact symplectic map (see \cite{ortegakunze} for an introduction on exact symplectic maps).  It is worth mentioning that Franks gave in \cite{franks} applications of his abstract result to the equation of the forced (non-relativistic) pendulum; anyway they need some clarification. As for a comparison with this and other results on the (non-relativistic) pendulum, we refer the interested reader to the end of this introductory section. For the proof of the instability result in Theorem \ref{introma} we use a theorem by Ortega \cite{ortega1}, together with the Poincaré-Hopf Theorem \cite{granasdugundji}.

\smallskip

\noindent Our second result provides a restriction on the parameter $a$ which makes the Poincaré map a "twist map", whose definition is recalled at the beginning of Section 2.2. More precisely in Section 3.2 we prove, by a Sturm comparison argument,
\begin{teo}\label{introtw}
If $a<\frac{\pi^2}{T^2}$, then the Poincaré map associated to system (\ref{hintr}) is twist. If $a=\pi^2/T^2$ and $f$ does not vanish identically the same result holds true. Moreover the condition $a\leq\frac{\pi^2}{T^2}$ is optimal. 
\end{teo}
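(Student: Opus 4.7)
The plan is to estimate the $(1,2)$-entry of the derivative of the Poincar\'e map along an arbitrary orbit by Sturm comparison with a constant-coefficient harmonic oscillator, and then to exhibit an explicit non-twist example for the optimality.

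Let $(q(t),p(t))$ be any solution of \eqref{hintr} and let $(\xi(t),\eta(t))=(\partial q/\partial p_{0},\partial p/\partial p_{0})$ be the corresponding solution of the variational equation. Since $H_{qp}\equiv 0$ for $H=\sqrt{p^2+1}-a\cos q-f(t)q$, this reads
\begin{equation*}
\dot\xi=A(t)\,\eta,\qquad \dot\eta=-c(t)\,\xi,\qquad \xi(0)=0,\ \eta(0)=1,
\end{equation*}
with $A(t):=(p(t)^2+1)^{-3/2}\in(0,1]$ and $c(t):=a\cos q(t)\leq a$. Eliminating $\eta$ yields the Sturm--Liouville form
\begin{equation*}
\frac{d}{dt}\Bigl(\frac{\dot\xi}{A(t)}\Bigr)+c(t)\,\xi=0,\qquad \xi(0)=0,\ \dot\xi(0)=A(0)>0.
\end{equation*}
The twist property amounts to asking that $\xi(T)\neq 0$ along every orbit, so it suffices to show that the first positive zero of $\xi$ lies strictly beyond $T$.

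I would compare with the companion equation $y''+ay=0$, $y(0)=0$, $y'(0)=1$, whose first positive zero is $\pi/\sqrt{a}$. Since $1/A(t)\geq 1$ and $c(t)\leq a$, Sturm's comparison theorem gives that the first positive zero of $\xi$ is at least $\pi/\sqrt{a}$. For $a<\pi^{2}/T^{2}$ this exceeds $T$, so $\xi(T)>0$ and the twist property holds. At the borderline $a=\pi^{2}/T^{2}$, I argue by contradiction: suppose $\xi(\tau)=0$ for some $\tau\in(0,T]$. The strict (equality) case of Sturm's theorem, obtained via Picone's identity or equivalently from the mixed Wronskian $w(t)=y(t)\dot\xi(t)/A(t)-y'(t)\xi(t)$ (which satisfies $w(0)=0$ and has monotone sign along the orbit), forces $1/A(t)\equiv 1$ and $c(t)\equiv a$ on $(0,\tau)$. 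This means $p\equiv 0$ and $q\equiv 2k\pi$ there, and substitution in Hamilton's equation $\dot p=-a\sin q+f(t)$ yields $f\equiv 0$ on $(0,\tau)$; by $T$-periodicity $f\equiv 0$ on $\mathbb{R}$, contradicting $f\not\equiv 0$.

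For optimality, take $f\equiv 0$: then $q\equiv 0,\ p\equiv 0$ is an equilibrium and the variational equation collapses to $\ddot\xi+a\xi=0$, so $\xi(T)=\sin(\sqrt{a}\,T)/\sqrt{a}$. For $a=\pi^{2}/T^{2}$ this vanishes and for $a$ slightly larger it changes sign, so the Poincar\'e map fails to be twist at the equilibrium; hence the threshold $\pi^{2}/T^{2}$ is sharp and the assumption $f\not\equiv 0$ is indispensable at equality. The main technical difficulty is the borderline case, where the plain Sturm inequality is not strict: one has to combine the equality version of Picone's identity (or the Wronskian argument above) with the Hamilton equations to transfer the vanishing of the leading-order terms all the way to a vanishing of $f$.
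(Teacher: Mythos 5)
Your proof is correct and follows essentially the same route as the paper: you pass to the variational equation in Sturm--Liouville form, compare with a constant-coefficient oscillator whose first conjugate point is $T$ (the paper uses $\ddot z+(\pi^2/T^2)z=0$ with solution $\sin(\pi t/T)$, you use $y''+ay=0$ with first zero $\pi/\sqrt{a}$ --- the same comparison), and treat the borderline $a=\pi^2/T^2$ via the equality case of the comparison theorem, which forces $p\equiv 0$, $q\equiv 2k\pi$ and hence $f\equiv 0$; this is just the contrapositive of the paper's argument that $f\not\equiv 0$ gives a strict integral inequality and hence strict separation of the Pr\"ufer angles. Your optimality example (the equilibrium of the autonomous equation, where the variational equation collapses to $\ddot\xi+a\xi=0$ and $\xi(T)=\sin(\sqrt{a}\,T)/\sqrt{a}\leq 0$ for $a\geq\pi^2/T^2$) coincides with the paper's remark.
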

\noindent This important fact leads us to more information on the number of solutions and their stability properties. To state these results, we adapt a definition given in \cite{ortega2} saying that a planar first order system in the variables $(q,p)$ is degenerate if there exists a curve $(q_s(0), p_s(0))$ such that the application $s\mapsto q_s(0)$ is defined from $\mathbb{R}$ onto $\mathbb{R}$, satisfies $q_{s+2\pi}(t)=q_s(t)+2\pi$ and $p_{s+2\pi}(t)=p_s(t)$, is bijective in $[0,2\pi)$ and continuous and for every $s\in [0,2\pi)$ the point $(q_s(0),p_s(0))$ is the initial condition of a $T$-periodic solutions with winding number $N$. Then we can state

\begin{teo}\label{twist}
If $0<a< \pi^2 / T^2$ either the number of isolated $T$-periodic solutions with winding number $N$ is finite or we are in the degenerate case and every degenerate solution is unstable. If $a=\pi^2 / T^2$ and $f(t)$ is not the trigonometric function $a \sin( \frac{2N\pi}{T}t)$, the same result holds true.
\end{teo}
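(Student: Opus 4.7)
The plan is to translate $T$-periodic solutions with winding number $N$ into critical points of a smooth $2\pi$-periodic generating function on the circle, and then to argue by a dichotomy on the size of the critical set.

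By Theorem \ref{introtw} and the exact symplecticity of the Poincar\'e map $P$ of (\ref{hintr}), the lift $\tilde P$ is an exact symplectic twist map of the strip, hence admits a generating function $S(q,Q)$ with $p=-\partial_q S(q,Q)$, $P=\partial_Q S(q,Q)$, and $S(q+2\pi,Q+2\pi)=S(q,Q)$. Setting
\[
W(q):=S(q,\,q+2N\pi),
\]
the twist condition uniquely determines $p=p(q)$ solving $\tilde P_1(q,p)=q+2N\pi$, and initial conditions of $T$-periodic solutions with winding number $N$ are in bijection with the critical points of the smooth $2\pi$-periodic function $W$, via $(q_0,p_0)=(q,p(q))$ at each critical $q$.

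The dichotomy is then the natural one. Either $W$ has finitely many critical points in $[0,2\pi)$---which is the first alternative of the statement---or infinitely many, in which case, by compactness of the circle, the critical set accumulates at some point $q_*$. Following the strategy of Ortega \cite{ortega2}, I would exploit the twist-induced continuous graph $q\mapsto(q,p(q))$ together with the exact symplectic identity $W'(q)=\tilde P_2(q,p(q))-p(q)$ to upgrade the accumulation at $q_*$ into a global continuous parameterization $s\mapsto(q_s(0),p_s(0))$ defined on $\mathbb{R}$, bijective on $[0,2\pi)$, every point of which is the initial datum of a $T$-periodic solution with winding number $N$. The required symmetries $q_{s+2\pi}(t)=q_s(t)+2\pi$ and $p_{s+2\pi}(t)=p_s(t)$ follow from the $2\pi$-periodicity of (\ref{introeq}) in $x$.

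For the instability claim, every point of the degenerate curve is a non-isolated fixed point of the appropriate shift of $P$, whose local fixed-point index cannot equal that of a stable fixed point of an exact symplectic map; thus Ortega's theorem \cite{ortega1}, combined with the Poincar\'e-Hopf argument already used in the proof of Theorem \ref{introma}, forces every degenerate solution to be unstable. The hardest step, I expect, is the passage from mere accumulation of zeros of $W'$ to a genuine continuous one-parameter family of solutions: since $f$ is only assumed periodic, not analytic, a cluster point of zeros of $W'$ does not automatically yield an arc of zeros, and the exact symplectic twist structure has to be exploited globally, in the spirit of \cite{ortega2}, to rule out pathological accumulation patterns.
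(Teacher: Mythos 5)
Your reduction to critical points of $W(q)=S(q,q+2N\pi)$ is a legitimate reformulation (it is essentially equivalent to the paper's reduction, via the twist condition and the implicit function theorem, to the zeros of $\Phi(\theta)=G(\theta,\phi(\theta))-\phi(\theta)$ where $\phi$ solves $F(\theta,r)=\theta+2N\pi$), and invoking Ortega's theorem \cite{ortega1} for the instability of non-isolated fixed points is the right idea. But the step you yourself flag as the hardest one is a genuine gap, and your proposed way of closing it points in the wrong direction. For a merely smooth circle function $W$ the critical set can be, for instance, a convergent sequence of isolated critical points together with its limit: then the number of isolated solutions is infinite and you are \emph{not} in the degenerate case, so the dichotomy of the statement simply fails. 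No ``global exploitation of the exact symplectic twist structure in the spirit of \cite{ortega2}'' can rule this out at that level of generality; some extra rigidity is indispensable.

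The missing ingredient is the analyticity of the Poincar\'e map in the initial conditions $(q_0,p_0)$, which is available here even though $f$ is only continuous in $t$: the right-hand side of system (\ref{rpend2}) is analytic in $(q,p)$ for each fixed $t$, so by analytic dependence on initial conditions (cf. \cite[p.44]{lef}) the time-$T$ map is analytic. This is precisely what the paper uses: Corollary \ref{cor1} then gives that the solution set of (\ref{sist}) is either finite or the graph of an analytic $2\pi$-periodic curve, which is the finite/degenerate dichotomy you need. In the degenerate case the fixed point set is locally an arc through each fixed point, so no sequence of Jordan curves surrounding the point and avoiding the fixed point set exists, and $T^{-N}S$ is not the identity near any point by the twist condition; hence \cite{ortega1} yields instability. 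Note that your appeal to the fixed point index and to Poincar\'e--Hopf is out of place here: the index of a non-isolated fixed point is not defined, and Poincar\'e--Hopf is only needed in the proof of Theorem \ref{introma} to locate an unstable point among finitely many isolated ones. Once you replace your open-ended accumulation argument by the analyticity of $W$ (equivalently, of the Poincar\'e map), your proof closes and coincides in substance with the paper's.
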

\noindent The qualitative properties of the $T$-periodic solutions with winding number $N$ (whose existence is guaranteed by Theorem \ref{introma} and, in the context of twist maps, by Ortega's result \cite{ortega3} recalled in Theorem \ref{ortteo}) follow from the application of two abstract facts for planar, exact symplectic and twist maps (Corollary \ref{cor1} and Corollary \ref{cor2} in Section 2) which follow from Theorem \ref{ortteo} as well. At this stage, it is important to recall and develop the "intersection property". 

\smallskip

\noindent We end this introductory section by a comparison between our results (and their method of proof) and some (somehow analogue) results for the forced non-relativistic pendulum equation
\begin{equation}\label{introcl}
\ddot{x}+a\sin x=f(t)
\end{equation}
\noindent with analogous hypothesis as in equation (\ref{introeq}). It is well known that it has at least two $T$-periodic solutions. The existence of one solution was proved by Hamel \cite{hamel} and rediscovered independently by Dancer \cite{dancer} and Willem \cite{willem}. Then the existence of a second solution has been proved by Mawhin and Willem \cite{mawhinwillem} using mountain pass arguments. It is worth mentioning that also Franks \cite{franks} gave his contribution using his version of the Poincaré-Birkhoff theorem. More details on Franks's result will be given at the end of section 3.1. We stress that our approach and results apply also in the case of equation (\ref{introcl}): it is just a matter of computation, the arguments are completely analogous. Therefore our results are an improvement also concerning the non-relativistic case.

\smallskip

\noindent The paper is organised as follows. In Section 2 we state and prove the abstract results for planar maps that we need in the sequel. More precisely, in Section 2.1 we deal with the existence of two fixed points of a planar exact symplectic map (Theorem \ref{franks}); in Section 2.2 we focus on exact symplectic and twist maps, recall the existence of two fixed points (Theorem \ref{ortteo}) and give some qualitative results on the number of such solutions (Corollary \ref{cor1}) and of their index (Corollary \ref{cor2}). In Section 3 we prove, using the results of Section 2, Theorem \ref{introma}, Theorem \ref{introtw} and Theorem \ref{twist}.
In Section 4 we give some more detailed results in the particular case when equation (\ref{introeq}) is autonomous. This is done for a better understanding of the main results.

\smallskip

\noindent I want to thank Professor Rafael Ortega who introduced me to this problem. Without his constant and patient supervision this work would have not been possible. I am indebted to Professor Anna Capietto whose suggestions have been very useful for the final layout.

\section{Two versions of the Poincaré-Birkhoff Theorem}

\subsection{If the annulus is not invariant}
The classical Poincaré-Birkhoff theorem gives two fixed points of an area preserving homeomorphism of the annulus which satisfies a boundary twist condition. In this section we will state a variation of this theorem due to Franks \cite{franks} in which it is eliminated the invariance of the annulus despite an increasing of regularity.\\
\smallskip
\noindent In a plane with coordinate $(\theta,r)$, consider, for $0<a<b$, the two strips $\tilde{A}=\mathbb{R}\times[-a,a]$ and $\tilde{B}=\mathbb{R}\times[-b,b]$ so that $\tilde{A}\subset \tilde{B}$, and the corresponding annuli $A=\mathbb{S}^1\times[-a,a]$ and $B=\mathbb{S}^1\times[-b,b]$ so that $A\subset B$.\\
We will deal with an embedding $\tilde{S}:\tilde{A}\rightarrow \tilde{B}$ defined as follows:
$$\tilde{S}(\theta,r)=(\theta_1,r_1)$$
and
$$
\begin{sistema}
\theta_1=F(\theta,r)\\
r_1=G(\theta,r)
\end{sistema}
$$
where $F,G:\mathbb{R}^2\rightarrow\mathbb{R}$ are functions of class $C^2$ such that
$$
F(\theta+2\pi,r)=F(\theta,r)+2\pi,\quad G(\theta+2\pi,r)=G(\theta,r).
$$
By the definition of $\tilde{S}$ we are allowed to think $\tilde{S}$ as a lift of a map $S:A\rightarrow B$ defined on the cylinder. Moreover we will suppose that the map $S$ is isotopic to the inclusion $i$, i.e. there exists a $C^2$ map $f:A\times[0,1]\rightarrow B$ such that for every $t\in[0,1]$, $f_t(x)=f(t,x)$ is a diffeomorphism from $A$ onto its image, $f_0=S$ and $f_1=i$.
\smallskip
  
\noindent Now consider the standard volume form $\omega=d\theta\wedge dr$ and remember that $\tilde{S}$ is area preserving if $d\theta\wedge dr=d\theta_1\wedge dr_1$. Now, if we consider the $1$-form
$$
\alpha=r_1d\theta_1-rd\theta,
$$
we have that $\tilde{S}$ is symplectic if and only if $\alpha$ is closed. But the strip is contractible, so closed and exact forms coincide. Summing up we have that $\tilde{S}$ is symplectic if and only if there exists $V\in C^3(\tilde{A})$ such that
$$
dV=r_1d\theta_1-rd\theta.
$$
This equivalence is no longer true in the cylinder, in fact the primitive $V$ could be multi-valued. So we have the following

\begin{defin}
We say that $\tilde{S}$ is \emph{exact symplectic} if there exists a function $V\in C^3(\tilde{A})$ such that
$$
dV=r_1d\theta_1-rd\theta\quad \mbox{and}\quad V(\theta+2\pi,r)=V(\theta,r)\quad \forall (\theta,r).
$$
\end{defin}
\noindent Now we can state the slightly modified theorem by Franks. He dealt with a map defined from an annulus into itself, we will need the case of a map defined from a strip into itself. His proof deals with very sophisticated techniques of differential geometry. We will reach the requested version translating his proof in our concrete case of the cylinder so that it will be understandable also for people who do not deal with those abstract tools.   

\begin{teo}\label{franks}
Consider a map $\tilde{S}:\tilde{A}\rightarrow\tilde{B}$ which is the lift of an exact symplectic embedding $S:A\rightarrow B$ isotopic to the inclusion such that $S(A)\subset int B$. Suppose that the following boundary twist condition is satisfied: there exists $r>0$ such that 
\begin{equation*}
\begin{split}
& F(\theta,a)-\theta>r, \quad \theta\in [0,2\pi)        \\
& F(\theta,-a)-\theta<-r, \quad \theta\in [0,2\pi).
\end{split}
\end{equation*} 
Then $\tilde{S}$ has at least two fixed points.
\end{teo}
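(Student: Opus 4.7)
\noindent The plan is to deduce Theorem \ref{franks} from the strip version of the Poincaré-Birkhoff theorem due to Le Calvez-Wang (Theorem \ref{pbstrip}), stated later in the section. That result requires, in addition to the boundary twist condition already assumed, area preservation and the \emph{intersection property} on the cylinder: every essential simple closed curve in $A$ meets its image under $S$. Since exact symplectic maps are automatically area preserving, the only non-trivial hypothesis to verify is the intersection property; this is where exactness (rather than mere area preservation) plays the decisive role.

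\bigskip

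\noindent To establish the intersection property I would argue by contradiction. Let $\gamma\subset A$ be an essential simple closed curve with $S(\gamma)\cap\gamma=\emptyset$; since $S$ is isotopic to the inclusion, $S(\gamma)$ sits in one of the two components of $A\setminus\gamma$. Lift $\gamma$ to an arc $\tilde\gamma:[0,2\pi]\to\tilde A$ satisfying $\tilde\gamma(2\pi)=\tilde\gamma(0)+(2\pi,0)$. The $2\pi$-periodicity of $V$ gives
\[
0 \;=\; V(\tilde\gamma(2\pi))-V(\tilde\gamma(0)) \;=\; \int_{\tilde\gamma}dV \;=\; \int_{\tilde S\tilde\gamma}r\,d\theta \;-\;\int_{\tilde\gamma}r\,d\theta .
\]
On the other hand, Stokes' theorem applied to the region $R$ enclosed between $\tilde\gamma$ and $\tilde S\tilde\gamma$ (closing up by the two vertical segments at $\theta=0$ and $\theta=2\pi$, on which $d\theta$ vanishes) identifies the right-hand side, up to sign, with the area of $R$; by the assumed disjointness this area is strictly positive. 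Contradiction.

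\bigskip

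\noindent With the intersection property in hand, the remaining hypotheses of Theorem \ref{pbstrip} are immediate from the statement: $\tilde S$ is the lift of a $C^2$ embedding $S$ isotopic to the inclusion, $S(A)\subset\operatorname{int} B$, and the boundary twist holds uniformly with modulus $r>0$. Direct application of Theorem \ref{pbstrip} then produces at least two fixed points of $\tilde S$, which is the desired conclusion.

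\bigskip

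\noindent The \textbf{main obstacle} lies in the geometric part of the intersection-property lemma: one must identify an actual enclosed region of positive area between $\tilde\gamma$ and $\tilde S\tilde\gamma$, but a general essential simple closed curve in a cylinder need not be a graph over $\mathbb{S}^1$ and its image may wind in a complicated way. I would handle this by reducing to graphs through an area-preserving isotopy, or by an approximation argument, before invoking Stokes' theorem as above. Once this topological reduction is secured, the remainder of the argument is a clean application of Theorem \ref{pbstrip}, which is precisely the elementary simplification the paper advertises over the differential-geometric route in \cite{franks}.
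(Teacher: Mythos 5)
Your reduction to Theorem \ref{pbstrip} skips over the central difficulty of the proof. As stated in the paper, Theorem \ref{pbstrip} applies to a map $\tilde S:\tilde{\mathcal A}\rightarrow\tilde{\mathcal A}$ that lifts a \emph{homeomorphism of an annulus onto itself}, isotopic to the identity and area preserving; it contains no ``intersection property'' hypothesis that could substitute for invariance. The map in Theorem \ref{franks} is only an embedding $S:A\rightarrow B$ of a smaller annulus into a strictly larger one with $S(A)\subset \mathrm{int}\, B$, so it is not a self-homeomorphism of any annulus, and Theorem \ref{pbstrip} cannot be invoked directly no matter which auxiliary properties of $S$ you verify. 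The entire content of the paper's proof is the construction of an extension $g:B\rightarrow B$ which (i) agrees with $S$ on a slightly smaller annulus $A_0$, (ii) is the identity near $\partial B$, (iii) is area preserving on all of $B$, (iv) has no fixed points outside $A$, and (v) satisfies the boundary twist condition on $\partial B$; only then is Theorem \ref{pbstrip} applied, to $\tilde g$. Point (iii) is where exactness really enters: for a crude extension $g_1$ the area defect $\Omega=1-\det g_1'$ has zero integral over $B^+=\mathbb{S}^1\times[a_0,b]$ precisely because $r_1d\theta_1-rd\theta$ is exact on the annulus (so $\int_{\mathbb{S}^1\times\{a_0\}}r\,d\theta=\int_{g_1(\mathbb{S}^1\times\{a_0\})}r\,d\theta$), and this vanishing is the solvability condition for Moser's deformation trick, which corrects $g_1$ to an area-preserving $g_2$ (Lemma \ref{claim1}). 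Points (iv) and (v) require a further composition with shear maps supported near $\partial A$ (Lemma \ref{claim2}). None of this appears in your argument.

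Your flux computation, $\int_{\tilde S\tilde\gamma}r\,d\theta=\int_{\tilde\gamma}r\,d\theta$ from the $2\pi$-periodicity of $V$, is correct and is indeed the standard way exactness forces curves to meet their images; the paper uses essentially the same computation, but for a different purpose, namely the area balance in Step 1 of Lemma \ref{claim1}. If you wished to argue via an intersection-property version of the Poincar\'e--Birkhoff theorem valid for non-invariant annuli, you would have to state and justify such a theorem (it is not Theorem \ref{pbstrip}), and you would still face the graph-restriction issue you flag at the end. As written, the proof has a genuine gap at its main step: the passage from an embedding $A\rightarrow B$ to a map to which a fixed-point theorem for invariant annuli applies.
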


\noindent The strategy of the proof is to extend $\tilde{S}$ to an homeomorphism $\tilde{g}$ of the strip $\tilde{B}$. So we can use the fact that the fixed points of the Poincaré-Birkhoff theorem are in fact fixed points of the lift. More precisely we will use the following result in \cite{lecalvez}
\begin{teo}\label{pbstrip}
Let $\tilde{\mathcal{A}}$ be a strip and let $\mathcal{A}$ be its corresponding annulus. Consider a map $\tilde{S}:\tilde{\mathcal{A}}\rightarrow\tilde{\mathcal{A}}$ which is the lift of a homeomorphism $S:\mathcal{A}\rightarrow \mathcal{A}$, isotopic by homeomorphisms to the identity and area preserving. Suppose that the following boundary twist condition is satisfied: 
\begin{equation}\label{bt1}
\begin{split}
& F(\theta,a)-\theta>0, \quad \theta\in [0,2\pi)        \\
& F(\theta,-a)-\theta<0, \quad \theta\in [0,2\pi).
\end{split}
\end{equation} 
Then $\tilde{S}$ has at least two fixed points.
\end{teo}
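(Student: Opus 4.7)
The plan is to extend $\tilde{S}$ to a self-homeomorphism $\tilde{g}$ of the larger strip $\tilde{B}$ satisfying the hypotheses of Theorem \ref{pbstrip}, arranged so that the two fixed points produced by the latter necessarily lie in the sub-strip $\tilde{A}$ and hence are fixed points of the original $\tilde{S}$. The condition $S(A)\subset\operatorname{int} B$ gives the geometric room to carry out the extension in the two collars $\mathbb{R}\times[a,b]$ and $\mathbb{R}\times[-b,-a]$, while the \emph{strict} twist margin (the $r>0$ in the statement, which for clarity I will call $\rho$ to avoid clashing with the radial coordinate) is the buffer that will forbid fixed points from appearing in those collars.

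Concretely, I would set $\tilde{g}=\tilde{S}$ on $\tilde{A}$ and, on each collar, interpolate continuously and periodically in $\theta$ between the boundary values of $\tilde{S}$ at $r=\pm a$ and pure translations $(\theta,r)\mapsto(\theta\pm\varepsilon,r)$ at $r=\pm b$, for a small $\varepsilon>0$. Such an interpolation takes care automatically of $2\pi$-periodicity in $\theta$, of being the lift of a map on the cylinder, of isotopy to the identity (via a straight-line isotopy in $\theta$), and of the strict boundary twist at $r=\pm b$ required by Theorem \ref{pbstrip}. The delicate point is to perform the interpolation so that the result is \emph{area-preserving} on all of $\tilde{B}$, since Theorem \ref{pbstrip} demands this. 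This is precisely where the exact-symplectic hypothesis enters: the $2\pi$-periodicity of the primitive $V$ is equivalent to the vanishing of the flux of $\tilde{S}$ across horizontal fundamental loops, which guarantees that each collar of $\tilde{B}$ and the corresponding gap of $\tilde{B}\setminus\tilde{S}(\tilde{A})$ have the same area. One can then match them by an area-preserving homeomorphism carrying the prescribed boundary data to the prescribed boundary data, built either by a Moser-type argument on the resulting topological annular region or as the time-one map of a suitable Hamiltonian realising the boundary behaviour.

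Once $\tilde{g}$ is in hand, Theorem \ref{pbstrip} yields at least two fixed points of $\tilde{g}$ in $\tilde{B}$. It remains to check that none of them lies in the collars: on $\mathbb{R}\times[a,b]$ the interpolation is arranged so that $F(\theta,r)-\theta$ stays strictly positive (using the margin $\rho$ inherited at $r=a$ and the positive translation $\varepsilon$ at $r=b$), and analogously $F(\theta,r)-\theta<0$ throughout $\mathbb{R}\times[-b,-a]$. Both fixed points therefore belong to $\tilde{A}$ and are fixed points of $\tilde{S}$. The principal obstacle I expect is exactly this simultaneous arrangement of the properties required of $\tilde{g}$ — homeomorphism of $\tilde{B}$, lift of a periodic map, isotopic to the identity, strict twist at $r=\pm b$, no fixed points in the collars, and area preservation — and exact symplecticity is precisely the ingredient that kills the flux obstruction and permits the area-preserving collar filling; for a merely symplectic (area-preserving on the cylinder) map such an extension need not exist.
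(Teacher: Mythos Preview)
You have misidentified the statement you are meant to prove. Your entire plan --- extend $\tilde{S}$ from a strip $\tilde{A}$ into a larger strip $\tilde{B}$, use exact symplecticity to match areas in the collars, arrange that no fixed points appear outside $\tilde{A}$, then invoke Theorem~\ref{pbstrip} --- is precisely the strategy the paper uses to prove Theorem~\ref{franks} (the Franks-type result for an exact symplectic embedding $\tilde{A}\to\tilde{B}$ with strict twist margin). But Theorem~\ref{pbstrip} is a \emph{different} statement: here $\tilde{S}$ is already a self-homeomorphism of a single strip $\tilde{\mathcal{A}}$, the hypothesis is mere area preservation (not exact symplecticity), there is no larger strip $\tilde{B}$, no condition $S(A)\subset\operatorname{int}B$, and no strict margin $r>0$ --- only the boundary twist inequalities~(\ref{bt1}). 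Your proposal invokes hypotheses that are simply absent from the statement and, most fatally, applies Theorem~\ref{pbstrip} in the course of proving Theorem~\ref{pbstrip}, which is circular.

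In the paper, Theorem~\ref{pbstrip} is not proved at all: it is quoted as a result of Le Calvez--Wang~\cite{lecalvez}, a refinement of the classical Poincar\'e--Birkhoff theorem asserting that both fixed points are already fixed points of the lift. The paper then uses it, via Lemmas~\ref{claim1} and~\ref{claim2}, to deduce Theorem~\ref{franks} --- and that deduction is essentially what you have sketched. So your argument is not wrong in spirit; it is a reasonable outline of the reduction of Theorem~\ref{franks} to Theorem~\ref{pbstrip}, but it proves the wrong theorem.
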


\noindent At the beginning we will work in the annuli $A$ and $B$. We observe that in this case is not possible to apply the isotopy extension theorem as stated in \cite[p.63]{Milnor} because $B$ should be without boundary. Anyway, by the hypothesis $S(A)\subset int B$ it is possible to slightly modify it in order to extend $S$ to $g_1:B\rightarrow B$ so that $g_1$ is isotopic to the identity and restricted to a neighbourhood of $\partial B$ is the identity (this is achieved in \cite[Theorem 1.3, p.180]{hirsch}). Notice that $g_1$ does not preserve the area out of $A$.\\
Now choose $a_0$ slightly smaller than $a$ and define the following subsets that we will use during the proof  
\begin{equation*}
\begin{split}
& \tilde{A}_0=\mathbb{R}\times [-a_0,a_0]        \\
& A_0=\mathbb{S}^1\times [-a_0,a_0].
\end{split}
\end{equation*} 
In order to apply theorem \ref{pbstrip} and get the result, let us prove the following two lemmas:
\begin{lemma}\label{claim1}
It is possible to alter $g_1:B\rightarrow B$ finding a diffeomorphism $g_2:B\rightarrow B$ such that
\begin{itemize}
\item $g_2$ is area preserving on $B$,
\item $g_2|_{A_0}=S$,
\item $g_2|_{\partial B}=Id$.
\end{itemize}
\end{lemma}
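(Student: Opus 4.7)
The plan is to apply Moser's trick (Dacorogna--Moser) to correct $g_1$ on the two outer strips $D_+ = \mathbb{S}^1 \times [a,b]$ and $D_- = \mathbb{S}^1 \times [-b,-a]$, where area preservation may fail, while leaving $g_1$ untouched on $A \supset A_0$ and on $\partial B$. The crucial preliminary step is to verify, using the exact symplectic hypothesis on $S$, that the areas of $D_\pm$ and of their images under $g_1$ agree componentwise.

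Since $S$ is exact symplectic, $S^*(r\,d\theta) = r\,d\theta + dV$ with $V$ of period $2\pi$ in $\theta$. For the loop $\gamma_a = \mathbb{S}^1 \times \{a\}$, periodicity gives $\oint_{\gamma_a} dV = 0$, and hence
\[
\int_{S(\gamma_a)} r\,d\theta \;=\; \int_{\gamma_a} S^*(r\,d\theta) \;=\; 2\pi a.
\]
The region $g_1(D_+)$ has outer boundary $\mathbb{S}^1 \times \{b\}$ (because $g_1$ is the identity near $\partial B$) and inner boundary $S(\gamma_a)$, so Stokes' theorem yields $\mathrm{area}(g_1(D_+)) = 2\pi b - 2\pi a = \mathrm{area}(D_+)$. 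The analogous statement for $D_-$ follows the same way.

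Now set $\omega_0 = \omega$ and $\omega_1 = g_1^*\omega$. By construction $\omega_0 = \omega_1$ on $A$ (where $g_1 = S$ is symplectic) and in a neighbourhood of $\partial B$ (where $g_1 = \mathrm{Id}$), while $\int_{D_\pm}\omega_0 = \int_{D_\pm}\omega_1$ by the computation just done. I then apply Moser's theorem on each annulus $D_\pm$ separately, obtaining diffeomorphisms $h_\pm : D_\pm \to D_\pm$ with $h_\pm^*\omega_1 = \omega_0$ and $h_\pm = \mathrm{Id}$ near $\partial D_\pm$. Extending by the identity on $A$ produces a smooth diffeomorphism $h : B \to B$, and the map $g_2 := g_1 \circ h$ is the desired one: $g_2^*\omega = h^*\omega_1 = \omega$ on all of $B$, $g_2 = g_1 = S$ on $A \supset A_0$, and $g_2 = g_1 = \mathrm{Id}$ on $\partial B$.

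The main obstacle is the area-matching step. The mere fact that $g_1$ preserves $\int_B\omega$ only yields the weak relation $\mathrm{area}(g_1(D_+)) + \mathrm{area}(g_1(D_-)) = 2\cdot 2\pi(b-a)$, which does not suffice for a componentwise application of Moser; the componentwise equality genuinely needs the exact symplectic hypothesis on $S$ through the Stokes computation above. Once the integrals match, the remainder is a routine application of the Dacorogna--Moser theorem on a compact manifold with boundary, together with the observation that the Moser flow is stationary wherever $\omega_0 = \omega_1$, so $h$ glues smoothly across the interface circles $\mathbb{S}^1 \times \{\pm a\}$.
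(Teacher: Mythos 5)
Your overall strategy is exactly the one the paper follows: extend first, then correct the extension on the outer annuli by Moser's method, with the key input being the componentwise area identity $\mathrm{area}(g_1(D_\pm))=\mathrm{area}(D_\pm)$, which you correctly derive from the exact symplectic hypothesis via $\oint_{\gamma_a}dV=0$ and Stokes. This is precisely Step 1 of the paper's proof (there phrased as $\mu(B^+)=\int_{B^+}\det g_1'$ via Green's formula), and you are right that the mere global area preservation of $g_1$ would not suffice. The difference is that the paper carries out Moser's construction by hand (explicit primitive $(\alpha,\beta)$ of $1-\det g_1'$ and the flow of $X_t=(\alpha/\Omega_t,-\beta/\Omega_t)$), whereas you invoke Dacorogna--Moser as a black box; that is a legitimate shortcut for everything except one point.

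The point where your argument has a genuine gap is the gluing across the interface circles $\mathbb{S}^1\times\{\pm a\}$. The claim that ``the Moser flow is stationary wherever $\omega_0=\omega_1$'' is false in general: the Moser vector field is built from a \emph{primitive} $\sigma$ of $\omega_1-\omega_0$, and it vanishes where $\sigma$ vanishes, not where $\omega_1-\omega_0$ does (take $\omega_1-\omega_0=f(r)\,d\theta\wedge dr$ and $\sigma=-\bigl(\int f\bigr)d\theta$: the primitive is generically nonzero on the zero set of $f$). Working on $D_+=\mathbb{S}^1\times[a,b]$ you can at best arrange $h_+=\mathrm{Id}$ \emph{on} the circle $\gamma_a$, but not on a neighbourhood of it inside $D_+$, since $\omega_1$ need not equal $\omega_0$ for $r>a$; the glued map $h$ is then only a homeomorphism, possibly not $C^1$ across $\gamma_a$, while the lemma asks for a diffeomorphism. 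The fix is exactly the device the paper introduces at the outset: choose $a_0<a$ and correct $g_1$ on $\mathbb{S}^1\times[a_0,b]$ instead. Since $S$ is area preserving on all of $A$, one has $\omega_0=\omega_1$ on the whole band $\mathbb{S}^1\times[a_0,a]$ of positive width, and (because the total integral of $\omega_1-\omega_0$ over $\mathbb{S}^1\times[a_0,b]$ vanishes) the primitive can be chosen to vanish identically on that band and near $\mathbb{S}^1\times\{b\}$; the Moser flow is then genuinely stationary on open neighbourhoods of both interfaces, the gluing is smooth, and one still gets $g_2=S$ on $A_0$, which is all the lemma requires. With that adjustment your proof is complete and coincides with the paper's.
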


\begin{lemma}\label{claim2}
It is possible to alter the lift $\tilde{g}_2:\tilde{B}\rightarrow\tilde{B}$ finding $\tilde{g}:\tilde{B}\rightarrow\tilde{B}$ such that
\begin{itemize}
\item $\tilde{g}$ is area preserving on $\tilde{B}$,
\item $\tilde{g}$ has no fixed points out of $\tilde{A}$,
\item $\tilde{g}$ satisfy the boundary twist condition (\ref{bt1}),
\item $\tilde{g}=\tilde{g_2}$ on $\tilde{A}$.
\end{itemize}
\end{lemma}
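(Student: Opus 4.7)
The plan is to define $\tilde g := \tilde g_2 \circ \tau$, where $\tau$ is an area-preserving shear in the $\theta$-direction supported outside $\tilde A$. Explicitly, I would choose a $C^2$ function $\phi:[-b,b]\to\mathbb{R}$ with $\phi\equiv 0$ on $[-a,a]$, $\phi>0$ on $(a,b]$, and $\phi<0$ on $[-b,-a)$, and set $\tau(\theta,r):=(\theta+\phi(r),r)$. Being a shear with $\theta$-independent displacement, $\tau$ is an area-preserving $C^2$ diffeomorphism of $\tilde B$ commuting with the deck transformation $\theta\mapsto\theta+2\pi$, so $\tilde g$ is the lift of a diffeomorphism of $B$. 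Three of the four required bullets then follow at once: area-preservation is inherited from the composition; $\tilde g=\tilde g_2$ on $\tilde A$ because $\tau|_{\tilde A}=\mathrm{Id}$; and using $\tilde g_2|_{\partial\tilde B}=\mathrm{Id}$ from Lemma \ref{claim1} one computes
\[
\tilde g(\theta,\pm b) = \tilde g_2(\theta+\phi(\pm b),\pm b) = (\theta+\phi(\pm b),\pm b),
\]
giving the boundary twist (\ref{bt1}) with $F(\theta,\pm b)-\theta = \phi(\pm b)$ of the correct signs.

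The main difficulty is to show that $\tilde g$ has no fixed points on $\tilde B\setminus\tilde A$. Writing $\tilde g_2=(F_2,G_2)$ and setting $\theta_1 := \theta + \phi(r)$, a fixed point at $(\theta,r)$ with $r\in(a,b]\cup[-b,-a)$ is equivalent to the coupled conditions
\[
G_2(\theta_1,r) = r, \qquad F_2(\theta_1,r) - \theta_1 = -\phi(r).
\]
For each $r$ define $V(r):=\{F_2(\theta_1,r)-\theta_1 : G_2(\theta_1,r)=r\}\subset\mathbb{R}$, which is compact thanks to the $2\pi$-periodicity $F_2(\theta_1+2\pi,r)-\theta_1-2\pi=F_2(\theta_1,r)-\theta_1$ and continuity. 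The task becomes that of finding $\phi$ with the required sign pattern such that $-\phi(r)\notin V(r)$ for every $r\in(a,b]\cup[-b,-a)$.

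To secure such a $\phi$, I would first strengthen Lemma \ref{claim1} by also arranging that $g_2$ has no fixed points on $B\setminus A_0$ apart from $\partial B$; this is possible because $g_2$ is prescribed only on $A_0$ and $\partial B$, leaving freedom both on $A\setminus A_0$ (so that the bullet $\tilde g=\tilde g_2$ on $\tilde A$ is preserved) and on $B\setminus A$. Under this added hypothesis, $V(\pm b)=\{0\}$ while $0\notin V(r)$ for every other $r$. The set $\Gamma:=\{(r,w)\in[-b,b]\times\mathbb{R} : w\in V(r)\}$ is then a closed subset of positive codimension in the strip, meeting the axis $\{w=0\}$ only at $(\pm b,0)$. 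The $C^2$ graph $r\mapsto -\phi(r)$ lies on the axis $\{w=0\}$ for $r\in[-a,a]$ (where $\Gamma$ is avoided by the previous observation), and on $[-b,-a)\cup(a,b]$ it can be chosen with the required sign pattern so as to avoid $\Gamma$ by a standard transversality perturbation: near $r=\pm b$ use that $V(\pm b)=\{0\}$ together with the sign of $\phi(\pm b)\neq 0$, and in the intermediate range exploit that the global bound $M>0$ on the displacement $F_2-\theta_1$ (finite by $2\pi$-periodicity) confines $V(r)$ to $[-M,M]$, so that taking $|\phi(r)|$ to exceed $M$ outside a thin transition band near $r=\pm a$, and to stay within the gap $\mathbb{R}\setminus V(r)$ inside the band (which exists by compactness and the strengthened hypothesis), delivers the desired function. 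The resulting $\tilde g$ satisfies all four bullet points.
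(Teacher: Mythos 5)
Your overall strategy (compose $\tilde g_2$ with an area-preserving shear in the $\theta$-direction whose translation exceeds the global displacement bound $M$, thereby killing fixed points and creating the boundary twist) is the same as the paper's, and your reduction of the fixed-point condition to $-\phi(r)\notin V(r)$ is correct. But there is a genuine gap: everything hinges on your ``strengthened Lemma \ref{claim1}'', namely that $g_2$ can be arranged to have no fixed points on $B\setminus A_0$ apart from $\partial B$, and this is asserted rather than proved. The justification ``$g_2$ is prescribed only on $A_0$ and $\partial B$, leaving freedom elsewhere'' does not work, because area preservation is a rigid global constraint: the extension is produced by Hirsch's isotopy extension theorem followed by Moser's flow correction, and neither step gives any control on fixed points in the collar. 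Worse, as literally stated your strengthening is incompatible with the construction of Lemma \ref{claim1}, which makes $g_2$ equal to the identity on a whole \emph{neighbourhood} of $\partial B$, so that an entire annular band of fixed points sits in $B\setminus A_0$. (That particular defect is harmless for your argument, since there $V(r)=\{0\}$ and $\phi(r)\neq 0$; but it shows the statement you are invoking is not a routine add-on.) In the region just above $r=a$, where $\phi$ is still small, you genuinely need $0\notin V(r)$, i.e.\ that the \emph{unknown interpolation} $g_2$ has no fixed points there, and nothing in Lemma \ref{claim1} delivers that. Eliminating fixed points of an area-preserving map relative to prescribed boundary data is precisely the kind of problem the Poincar\'e--Birkhoff circle of ideas warns can be obstructed, so this step cannot be waved through.

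The paper avoids the issue by a different placement of the shear's transition band: it puts it at $[a_0,a_0+\epsilon]$, anchored at the edge of $\tilde A_0$, where $\tilde g_2=\tilde S$ and the boundary twist hypothesis of Theorem \ref{franks}, extended by continuity, gives $P(\tilde g_2(x))-P(x)>r$ on the whole band; this rules out fixed points there with no extra hypothesis on the extension, and the full translation by $M$ handles everything above $a_0+\epsilon$, including $\partial\tilde B$. The price is that the final map agrees with $\tilde S$ only on $\tilde A_0$ rather than on all of $\tilde A$, which is all that is needed for the application of Theorem \ref{pbstrip}. Your insistence on supporting $\phi$ strictly outside $[-a,a]$ (to preserve the fourth bullet verbatim) is exactly what pushes you into the unproved strengthening; moving the support of your shear inward to begin at $a_0$ and using the continuity of the twist inequality there repairs the proof and reproduces the paper's argument.
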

So theorem \ref{franks} will follow from the application of theorem \ref{pbstrip} to $\tilde{g}$.  
\begin{proof}[Proof of Lemma \ref{claim1}]
To prove this lemma we will use Moser's ideas, presented in \cite{moserman} in a more general framework.
Let us break the proof in several steps.\\

\textbf{\textit{Step $1$}}.  Let $B^+=\mathbb{S}^1\times [a_0,b]\subset B$. It results
$$
\mu(B^+)=\int_{B^+}\det g_1^\prime(\theta,r)d\theta dr.
$$
Let $g_1(\theta,r)=(\theta_1,r_1)$. The $1$-form $r_1d\theta_1-rd\theta$ is exact symplectic on $A$ so its integral over any closed path in $A$ must vanish, in particular we have
$$
\int_{\mathbb{S}^1\times \{a_0\}}rd\theta=\int_{g_1(\mathbb{S}^1\times \{a_0\})}rd\theta.
$$
Moreover, because $g_1$ is the identity over $\mathbb{S}^1\times \{b\}$ we have 
$$
\int_{\mathbb{S}^1\times \{b\}}rd\theta=\int_{g_1(\mathbb{S}^1\times \{b\})}rd\theta
$$
so that
$$
\int_{\partial B^+}rd\theta=\int_{\partial B^+}r_1d\theta_1.
$$
Notice that $\theta_1=\theta_1(\theta,r)$ and $r_1=r_1(\theta,r)$ so that $d\theta_1=\frac{\partial\theta_1}{\partial\theta}d\theta+\frac{\partial\theta_1}{\partial r}dr$. 
Finally, by Green's formula,
$$
\int_{B^+}d\theta dr=\int_{B^+}\det g_1^\prime(\theta,r)d\theta dr
$$
that implies our claim.\\

\textbf{\textit{Step $2$}}. Define $\Omega(\theta,r)=1-\det g_1^\prime (\theta,r)$. Then there exist two $C^1$ functions $\alpha(\theta,r)$ and $\beta(\theta,r)$ $2\pi$-periodic in $\theta$ that vanish on $\partial B$ and such that $\Omega=\frac{\partial \beta}{\partial r}-\frac{\partial \alpha}{\partial\theta}$.\\
 Consider the two functions 
$$
\alpha(\theta,r)=-\int_0^\theta[\Omega(\Theta,r)-\bar{\Omega}(r)]d\Theta \quad\mbox{and}\quad \beta(\theta,r)=\int_{a_0}^r\bar{\Omega}(\rho)d\rho.
$$
with $\bar{\Omega}(r)=\frac{1}{2\pi}\int_0^{2\pi}\Omega(\theta,r)d\theta$. First of all they are of class $C^1$ because $g_1$ is of class $C^2$. \\
Notice that from Step 1 we have that
\begin{equation}\label{un}
\int_{a_0}^b\int_0^{2\pi}\Omega(\theta,r)d\theta dr=0,
\end{equation}
and, remembering that $g_1$ is the identity on a neighbourhood of $\partial B$, it results
\begin{equation}\label{du}
\Omega(\theta,b)=0
\end{equation}
Moreover, an exact symplectic map is also area preserving so the determinant of the Jacobian is $1$. It follows that
\begin{equation}\label{tr}
\Omega(\theta,r)=0 \mbox{   on   } \mathbb{S}^1\times [a_0,a].
\end{equation}
By computation we get $\Omega=\frac{\partial \beta}{\partial r}-\frac{\partial \alpha}{\partial\theta}$ on $B^+$. The fact that $\alpha$ and $\beta$ are $2\pi$-periodic with respect to $\theta$ is trivial for $\beta$, while comes from (\ref{un}) for $\alpha$. Moreover, remembering (\ref{du}) we have
$$
\alpha(\theta,b)=-\int_0^\theta[\Omega(\Theta,b)-\bar{\Omega}(b)]d\Theta=0
$$
and by (\ref{un})
$$
\beta(\theta,b)=\frac{1}{2\pi}\int_{a_0}^b\int_0^{2\pi}\Omega(\theta,r)d\theta=0
$$
and, by (\ref{tr}), $\alpha$ and $\beta$ vanish on $\mathbb{S}^1\times[a_0,a]$. We can do the same on $\mathbb{S}^1\times [-b_0,-a_0]$ and find $\alpha(\theta,r)$ and $\beta(\theta,r)$ with the same property. Finally we can extend these functions to all $B$ setting $\alpha(\theta,r)=0$ and $\beta(\theta,r)=0$ on $A_0$ and the properties on $\mathbb{S}^1\times[a_0,a]$ guarantee the regularity.\\

\textbf{\textit{Step $3$}}.
Consider the function, for $t\in[0,1]$, $\Omega_t(\theta,r)=(1-t)+t\det g_1^\prime(\theta,r)$ and define the vector field 
$$
X_1(t,\theta,r)=\frac{1}{\Omega_t(\theta,r)}\alpha(\theta,r),\quad X_2(t,\theta,r)=-\frac{1}{\Omega_t(\theta,r)}\beta(\theta,r)
$$
and the associated differential equation
$$
\dot{\theta}=X_1(t,\theta,r),\quad \dot{r}=X_2(t,\theta,r)
$$
with solution $\phi_t =(\Theta_t, R_t)$ passing through $(\theta,r)$ at time $t=0$. The solution is unique because $X_1$ and $X_2$ are of class $C^1$ (this justifies the hypothesis of $S$ being $C^2$). We claim that
$$
\Omega_t(\Theta_t,R_t)\det (\frac{\partial(\Theta_t,R_t)}{\partial(\theta,r)})=1, \quad t\in[0,1].
$$
Remember that a map isotopic to the identity is also orientation preserving, while the converse is false in the cylinder (as a counterexample take the map $(\theta,r)\mapsto (-\theta,-r)$). Hence we have that $\det g_1^\prime>0$ so that the vector field is well defined. Notice that if $(\theta,r)\in B$ the solution does not leave $B$ because the boundary circles of $B$ are continua of fixed points: it implies that $\phi_t(B)=B$. 
Using Liouville formula for the linearized equation we have, for every $t\in[0,1]$
\begin{equation*}
\begin{split}
&\Omega_t(\Theta_t,R_t)\det (\frac{\partial(\Theta_t,R_t)}{\partial(\theta,r)})\\
&=\Omega_t(\Theta_t,R_t)\exp\{\int_0^ttr(\frac{\partial X_1}{\partial\theta}+\frac{\partial X_2}{\partial r})(s,\Theta_s,R_s)ds   \}\\
&=\Omega_t(\Theta_t,R_t)\exp\{\int_0^t(-\frac{1}{\Omega^2_t}\alpha\frac{\partial\Omega_t}{\partial \theta}+\frac{1}{\Omega^2_t}\beta\frac{\partial\Omega_t}{\partial r}-\frac{1}{\Omega_t}\frac{\partial\Omega_t}{\partial t} )ds \}
\end{split}
\end{equation*}
where in the last equality we used the properties of $\Omega$ and the fact that $\partial\Omega_t/\partial t=-\Omega$.  

\noindent So, we have to prove that
$$
\Omega_t(\Theta_t,R_t)\exp\{\int_0^t(-\frac{1}{\Omega^2_t}\alpha\frac{\partial\Omega_t}{\partial \theta}+\frac{1}{\Omega^2_t}\beta\frac{\partial\Omega_t}{\partial r}-\frac{1}{\Omega_t}\frac{\partial\Omega_t}{\partial t})ds  \}=1.
$$ 
Passing to the logarithm and differentiating with respect to $t$ this equality is true if and only if
$$
-\frac{1}{\Omega^2_t}\alpha\frac{\partial\Omega_t}{\partial \theta}+\frac{1}{\Omega^2_t}\beta\frac{\partial\Omega_t}{\partial r}-\frac{1}{\Omega_t}\frac{\partial\Omega_t}{\partial t}= -\frac{1}{\Omega_t}[\frac{\partial\Omega_t}{\partial \theta}\dot{\Theta}_t+\frac{\partial\Omega_t}{\partial r}\dot{R}_t+\frac{\partial\Omega_t}{\partial t}]
$$    
that is the case remembering the definition of $\Theta_t$ and $R_t$.\\

\textbf{\textit{Step $4$}}. The function $g_2=g_1\circ\phi_1$ satisfies the lemma.\\
Indeed, by the previous step  
$$
\det g_2^\prime=\det(g_1^\prime\circ\phi_1)\det\phi_1^\prime=\Omega_1(\Theta_1,R_1)\det (\frac{\partial(\Theta_1,R_1)}{\partial(\theta,r)})=1
$$ 
that means that is area preserving. Moreover, by the definition of the vector field $(X_1,X_2)$ we have $\phi_1|_{\partial B}=Id$ and $\phi_1|_{A_0}=Id$ that imply $g_2|_{\partial B}=Id$ and $g_2|_{A_0}=S$.
\end{proof}

With the same notation let us conclude with the proof of Lemma \ref{claim2}. This part does not involve differential forms so we report the version by Franks.

\begin{proof}[Proof of Lemma \ref{claim2}]
Let $\tilde{g}_2:\tilde{B}\rightarrow\tilde{B}$ be the lift (fixed by the boundary twist condition) of $g_2$ that extends $\tilde{f}:\tilde{A}\rightarrow\tilde{B}$. Now consider 
$$
M_0:=\sup_{x\in\tilde{B}}d(\tilde{g}_2(x),x)
$$ 
where $d$ is the distance in $\mathbb{R}^2$ and fix $M>M_0$. So we have that $M$ is greater than the distance that a point in $\tilde{B}$ could be moved by $\tilde{g}_2$. Now consider the strip $\tilde{A}^+=\mathbb{R}\times[a_0,a_0+\epsilon]\subset\tilde{A}$ such that by the boundary twist condition and continuity we have that for all $x\in\tilde{A}^+$
\begin{equation}\label{rew}
P(\tilde{g_2}(x))-P(x)>r
\end{equation}
where $P(x_1,x_2)=x_1$ is the projection on the first component.

\noindent Define $\tilde{h}:\tilde{B}\rightarrow\tilde{B}$ by
$$
\tilde{h}(\theta,r)=(\theta+M\rho(r),r)
$$
where $\rho(r)$ is smooth, monotone such that $\rho(r)=0$ for $r<a_0$ and $\rho(r)=1$ for $r>a_0+\epsilon$. Notice that $\tilde{h}$ is area preserving, it is the identity for $r<a_0$, it is a translation by $M$ if $r>a_0+\epsilon$, if $x\in\tilde{A}^+$ then $\tilde{h}(x)\in\tilde{A}^+$ and $P(\tilde{h}(x))>P(x)$.\\
Finally consider 
$$
\tilde{g}_3=\tilde{g_2}\circ\tilde{h}.
$$
For $x\in\tilde{A}^+$ we have, using (\ref{rew})
$$
P(\tilde{g}_3(x))-P(x)=P(\tilde{g}_2(\tilde{h}(x)))-P(x)>r+P(\tilde{h}(x))-P(x)>r>0
$$
which means that we do not have fixed points in $\tilde{A}^+$. Moreover, if we take $x=(\theta,r)\in\mathbb{R}\times[a_0+\epsilon,b]$ then $\tilde{g}_3(x)=\tilde{g}_2(\tilde{h}(x))=\tilde{g}_2(\theta+M,r)$ and by definition of $M$ that means that we do not have fixed points in $\mathbb{R}\times[a_0+\epsilon,b]$ and the boundary twist condition is satisfied on $\mathbb{R}\times \{b\}$.\\
To conclude we consider $\tilde{A}^-=\mathbb{R}\times[-a_0-\epsilon,-a_0]$ and define analogously $\tilde{h^\prime}(\theta,r)=(\theta-M\rho(r),r)$ with similar properties of $\tilde{h}$. Defining $\tilde{g}=\tilde{g}_3\circ\tilde{h^\prime}$ we get also the complete boundary twist condition.
\end{proof}

Let us conclude with a remark on the stability of such fixed points. Remember that a fixed point $p$ of a one-to-one continuous map $S:U\subset\mathbb{R}^N\rightarrow\mathbb{R}^N$ is said to be stable in the sense of Liapunov if for every neighbourhood $V$ of $p$ there exists another neighbourhood $W\subset V$ such that, for each $n>0$, $S^n(W)$ is well defined and $S^n(W)\subset V$. We have:
\begin{cor}\label{inst}
If $S$ is analytic, at least one of the fixed points coming from theorem \ref{franks} is unstable.
\end{cor}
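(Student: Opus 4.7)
The plan is to argue by contradiction, combining Ortega's fixed point index theorem from \cite{ortega1} with a global Poincar\'e--Hopf type computation of the algebraic count of fixed point indices.

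First, I would recall Ortega's result from \cite{ortega1}: for an analytic area preserving homeomorphism, every isolated Liapunov stable fixed point has fixed point index equal to $1$. Let $p_1$ and $p_2$ denote the two distinct fixed points of $S$ produced by Theorem \ref{franks}, and assume, aiming at a contradiction, that \emph{both} are Liapunov stable. Since $S$ is analytic and area preserving, its fixed point set is either finite or contains a real analytic arc. In the finite case, each fixed point is stable by assumption, hence of index $1$ by Ortega, and so the algebraic count $\sum_p i(S,p)$ is at least $2$.

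Next, I would show that this algebraic count must in fact vanish, yielding the contradiction. Consider the displacement map $\phi:A\to\mathbb{R}^2$ defined by $\phi(\theta,r)=(F(\theta,r)-\theta,\,G(\theta,r)-r)$. Thanks to the $2\pi$-periodicity of $F$ and $G$ in $\theta$, it descends to a well defined map on the cylinder, whose zero set coincides with the fixed point set of the lift $\tilde{S}$. The boundary twist condition forces $\phi_1$ to be strictly positive on $\mathbb{S}^1\times\{a\}$ and strictly negative on $\mathbb{S}^1\times\{-a\}$, so $\phi$ never vanishes on $\partial A$. A direct degree computation on the boundary of the fundamental rectangle $[0,2\pi]\times[-a,a]$, in which the two vertical sides cancel by periodicity in $\theta$ and neither horizontal side encircles the origin (because $\phi_1$ keeps a definite sign on each), gives $\deg(\phi,A,0)=0$. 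By the Poincar\'e--Hopf formula in the form stated in \cite{granasdugundji}, this degree agrees with the sum of the local fixed point indices of $S$, which contradicts the lower bound $\geq 2$ obtained above.

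The main obstacle is the non-isolated case: if the fixed point set of $S$ is not finite, analyticity forces it to contain a real analytic arc $\Gamma$ of fixed points, on which Ortega's theorem cannot be applied pointwise. I would handle this case separately by observing that on such an arc the map is the identity, so area preservation transverse to $\Gamma$ together with the analytic normal form of $S$ near $\Gamma$ produces neighbouring orbits drifting tangentially along $\Gamma$; this yields at least one point of $\Gamma$ that fails to be Liapunov stable. Once the degenerate case is dealt with in this way, the degree/index contradiction above closes the argument and establishes that at least one of the two fixed points furnished by Theorem \ref{franks} must be unstable.
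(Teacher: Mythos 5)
Your treatment of the isolated case is essentially the paper's argument: Ortega's theorem from \cite{ortega1} forces every isolated stable fixed point of an analytic, orientation and area preserving map to have index $1$ (the alternative $S=Id$ near the point being excluded), while the total index over the annulus is $0$; your direct winding-number computation of $\deg(\phi,A,0)$ on the fundamental rectangle, using the boundary twist condition and periodicity, is a perfectly acceptable substitute for the paper's citation of the Poincar\'e--Hopf formula. One logical slip there: you assume only that the two fixed points $p_1,p_2$ from Theorem \ref{franks} are stable, and then write that ``each fixed point is stable by assumption''. That does not follow --- there may be further fixed points of negative index, and then the total index can be $0$ with $p_1,p_2$ both of index $1$. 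The correct reading (and the paper's) is: since the indices sum to $0$ and there are at least two fixed points, some fixed point has non-positive index, hence is unstable by Ortega. State the contradiction hypothesis as ``every fixed point is stable''.

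The genuine gap is in the non-isolated case. You assert that area preservation plus an ``analytic normal form of $S$ near $\Gamma$'' produces orbits drifting tangentially along the arc of fixed points, and hence instability. No such normal form is invoked or constructed, and the drift is not a consequence of the stated hypotheses in any standard way: nothing you wrote rules out, a priori, that all orbits near a point of $\Gamma$ remain confined near that point even though they do not lie on invariant circles. This is precisely the delicate point, and the paper does not argue dynamically at all. It uses the full statement of Ortega's theorem: if $p$ is stable then either $S=Id$ on a neighbourhood of $p$ (impossible here, since an analytic map equal to the identity on an open set is the identity on the whole connected strip, contradicting the boundary twist condition) or there is a sequence of Jordan curves $\Gamma_n\to p$ with $\Gamma_n\cap Fix(S)=\emptyset$ and $i(S,\hat{\Gamma}_n)=1$. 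For a non-isolated point of the analytic set $Fix(S)$ the local structure theorem \cite{krants} gives finitely many branches emanating from $p$; any small Jordan curve encircling $p$ must cross one of them, so the second alternative also fails and $p$ is unstable. You should replace your drift heuristic by this argument (or by an equally rigorous substitute); as written, the non-isolated case is not proved.
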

\begin{proof}
For the special case of dimension two, there exists a relation between the stability of a fixed point and its fixed point index. In fact it was proved in \cite{ortega1} that if a continuous one-to-one map $S$ which is also orientation and area preserving has a stable fixed point $p$ then either $S=Id$ in some neighbourhood of $p$ or there exists a sequence of Jordan curves $\{\Gamma_n\}$ converging to $p$ such that, for each $n$,
$$
\Gamma_n\cap Fix(S)=\emptyset,\quad i(S,\hat{\Gamma_n})=1
$$
where $\hat{\Gamma_n}$ is the bounded component of $\mathbb{R}^2\setminus\Gamma_n$.\\
The set of fixed points can be described by the equation 
$$
(F(\theta, r)-\theta)^2+(G(\theta, r)-r)^2=0
$$
This is an analytic subset of the plane, indeed is the set of the zeros of an analytic function. The local structure of these sets is described in \cite{krants}: they can contain arcs, isolated points and points with a finite number of branches emanating from them. In the cases of a non isolated fixed point we can not find a sequence of Jordan curves converging to the point and not crossing the set, so we have instability. In the case of isolated fixed points, remember that the Euler characteristic of the strip is null, so, using the Poincaré-Hopf index formula for manifolds with boundary \cite[p.447 Theorem 3.1 and p.233 Proposition 4.5]{granasdugundji}, at least one fixed point does not have positive index and so it is unstable.    

\end{proof}

\subsection{The twist condition}\label{sectw}     
In this section we will consider the case in which the map $S$ satisfies a twist condition. More precisely, using the same notation of the previous section, consider a map $S:\Omega\rightarrow\mathbb{R}$ where $\Omega=\{(\theta,r)\in\mathbb{R}^2 : a<r<\psi(\theta)\}$, $a$ is a fixed constant and $\psi:\mathbb{R}\rightarrow]a,+\infty ]$ is a $2\pi$-periodic, lower semi-continuous function.
\begin{defin} 
The defined above map $S$ satisfies the \emph{twist condition} if\newline $\partial\theta_1\ / \partial r>0$. If the map is defined from an annulus into itself, we say that satisfies the \emph{twist condition} if the lift satisfies the same condition.
\end{defin}
\noindent Note that the just introduced \emph{twist condition} and the \emph{boundary twist condition} used in Theorem \ref{franks} and \ref{pbstrip} are independent. Indeed, consider the annulus $A=[0,2\pi]\times[-1,1]$: the map
\begin{equation*}
\begin{sistema}
\theta_1=\theta+e^r  \\
r_1=r
\end{sistema}
\end{equation*}
satisfies the twist condition but not the boundary twist one; the map
\begin{equation*} 
\begin{sistema}
\theta_1=\theta+r^2+\frac{3}{2}r  \\
r_1=r
\end{sistema}
\end{equation*}
satisfies the boundary twist condition but not the twist one. Moreover the two conditions coexist in the map
\begin{equation*}
\begin{sistema}
\theta_1=\theta+r  \\
r_1=r.
\end{sistema}
\end{equation*}
The twist condition is important in itself because it is fundamental for the application of the Aubry-Mather theory. In our case it will lead us to more results on the equation of the forced relativistic pendulum.\\    
It can be proved \cite{ortega3} that
\begin{teo}\label{ortteo}
Assume that $S$ is exact symplectic and satisfies the twist condition. Fix an integer $N$ and assume that for each $\theta\in\mathbb{R}$ there exists $r_\theta\in]a,\psi(\theta)[$ with 
\begin{equation}\label{cond}
F(\theta,a)<\theta+2N\pi<F(\theta,r_\theta).
\end{equation}
Then the system 
\begin{equation}\label{sist}
\begin{sistema}
F(\theta,r)=\theta+2N\pi \\
G(\theta,r)=r,
\end{sistema}
\end{equation}
with $\theta\in[0,2\pi[, (\theta,r)\in\Omega$, has at least two solutions.
\end{teo}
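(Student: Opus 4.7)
The strategy is to combine the twist condition with the exact symplectic structure in order to reduce system (\ref{sist}) to a critical-point problem for a single $2\pi$-periodic function on $\mathbb{R}$. By the twist condition $\partial F/\partial r>0$, for each fixed $\theta$ the map $r\mapsto F(\theta,r)$ is strictly increasing on $]a,\psi(\theta)[$. Hypothesis (\ref{cond}) together with the intermediate value theorem then produces a unique $\rho(\theta)\in]a,r_\theta[$ with $F(\theta,\rho(\theta))=\theta+2N\pi$; the implicit function theorem makes $\rho$ smooth, and the equivariance $F(\theta+2\pi,r)=F(\theta,r)+2\pi$ combined with uniqueness forces $\rho$ to be $2\pi$-periodic. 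The whole problem thus reduces to locating the zeros of $\theta\mapsto G(\theta,\rho(\theta))-\rho(\theta)$ in $[0,2\pi)$.

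To detect these zeros variationally, I would exploit the exact symplectic primitive $V\in C^3$ satisfying $dV=r_1\,d\theta_1-r\,d\theta$ and $V(\theta+2\pi,r)=V(\theta,r)$, and set $W(\theta):=V(\theta,\rho(\theta))$, which is $C^1$ and $2\pi$-periodic. Expanding the $1$-form $dV$ gives $V_\theta=G\,F_\theta-r$ and $V_r=G\,F_r$, while differentiating $F(\theta,\rho(\theta))=\theta+2N\pi$ yields $F_\theta+F_r\rho'(\theta)=1$. Combining these two identities,
\[
W'(\theta)=V_\theta+V_r\rho'(\theta)=G(\theta,\rho(\theta))\bigl(F_\theta+F_r\rho'(\theta)\bigr)-\rho(\theta)=G(\theta,\rho(\theta))-\rho(\theta),
\]
so the solutions of (\ref{sist}) correspond exactly to the critical points of the periodic function $W$.

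Since $W$ is continuous and $2\pi$-periodic on $\mathbb{R}$, it attains both its maximum and its minimum on $[0,2\pi)$; if $W$ is nonconstant these are two geometrically distinct critical points and hence two solutions of (\ref{sist}), while if $W$ is constant every point is critical and infinitely many solutions exist. The delicate step I foresee is the derivation of the identity $W'=G-\rho$, which rests on both the exact symplectic relation for $dV$ and the implicit differentiation of the defining equation of $\rho$; once this algebraic identity is in hand, the remainder of the argument is the elementary observation that a continuous periodic function on the line has at least two critical points per period.
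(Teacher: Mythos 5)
Your argument is correct. Note that the paper itself gives no proof of Theorem \ref{ortteo} — it is quoted from \cite{ortega3} — so the comparison is with Ortega's argument, and what you have written is essentially that argument: the twist condition plus (\ref{cond}) yields the $2\pi$-periodic leaf $\rho(\theta)$ solving $F(\theta,\rho(\theta))=\theta+2N\pi$, the exact symplectic primitive gives $V_\theta=GF_\theta-r$, $V_r=GF_r$, and hence $W'(\theta)=G(\theta,\rho(\theta))-\rho(\theta)$, so the max and min of the $2\pi$-periodic function $W$ produce two solutions of (\ref{sist}) in $[0,2\pi)$ (or a continuum if $W$ is constant). This is also precisely the mechanism behind the remark following the theorem in the text: the identity $\int_0^{2\pi}\bigl(G(\theta,\rho(\theta))-\rho(\theta)\bigr)\,d\theta=\int_0^{2\pi}W'(\theta)\,d\theta=0$ forces the curve $S(\Gamma)$, $\Gamma=\{r=\rho(\theta)\}$, either to coincide with $\Gamma$ (after the translation $T^{-N}$) or to meet both components of its complement, which is the \emph{strong intersection property}. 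Two minor points worth making explicit: uniqueness of $\rho(\theta)$ guarantees that \emph{every} solution of (\ref{sist}) lies on the graph of $\rho$, so the correspondence with critical points of $W$ is a bijection and no solutions are lost; and the two critical points of a nonconstant $W$ are distinct modulo $2\pi$ because $\max W>\min W$, so they give geometrically different solutions.
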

We will explore some consequences of this theorem but let us first point out some remarks on the concept of \emph{intersection property}.\\ 
Consider the cylinder $C=\mathbb{S}^1\times\mathbb{R}$ with the usual covering  map $\Pi(\theta, r)=(\overline{\theta}, r)$; by the periodicity of $S$, we can define a new map, also denoted by $S$, mapping $\Pi(\Omega)$ into $C$. Consider also a non contractible Jordan curve $\Gamma$ in $C$ with positive orientation. The curve $\Gamma$ divides $C$ in two connected components and let us call $R_-(\Gamma)$ the lower one and $R_+(\Gamma)$ the other one; similarly the curve $\Gamma_1=S(\Gamma)$ divides $C$ in two connected components and let us call $R_-(\Gamma_1)$ and $R_+(\Gamma_1)$.  
We can give the following
\begin{defin}
Using the notations given above, we say that a map $S:C\rightarrow C$ has the \emph{intersection property} if for every non-contractible Jordan curve $\Gamma\subset C$ that is the graph of a function,
$$
\Gamma\cap S(\Gamma)\neq\emptyset.
$$
A map $S:C\rightarrow C$ has the \emph{strong intersection property} if it has the intersection property and for every non-contractible Jordan curve $\Gamma\subset C$ that is the graph of a function, either $\Gamma=S(\Gamma)$ or 
$$
S(\Gamma)\cap R_+(\Gamma)\neq\emptyset\quad\mbox{and}\quad S(\Gamma)\cap R_-(\Gamma)\neq\emptyset.
$$
\end{defin}
In the proof of theorem \ref{ortteo} it was proved that if a map $S:C\rightarrow C$ is exact symplectic and preserves the orientation, then it has the strong intersection property. Note that the strong intersection property implies the intersection property, while the converse is false. As a counterexample consider the following map of the cylinder in itself:
\begin{equation*}
\begin{sistema}
\theta_1=\theta  \\
r_1=r+\psi(\theta)
\end{sistema}
\end{equation*}
where $\psi(\theta)$ is a non-negative continuous function such that exists $\theta_*$ such that $\psi(\theta_*)=0$.\\ 
Moreover we have that

\begin{lemma}\label{interse1}
If a map $S:C\rightarrow C$ has the strong intersection property, then for every non-contractible Jordan curve  $\Gamma\subset C$ that is the graph of a function we have
$$
\#\{p\in\Gamma\cap S(\Gamma)\}\geq 2.
$$
\end{lemma}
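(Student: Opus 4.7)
The plan is to apply the strong intersection property and reach the conclusion by a standard connectedness argument. The definition immediately splits the problem into two cases: either $\Gamma=S(\Gamma)$, in which case $\Gamma\cap S(\Gamma)=\Gamma$ is already infinite and the conclusion is trivial, or $S(\Gamma)\cap R_+(\Gamma)\neq\emptyset$ and $S(\Gamma)\cap R_-(\Gamma)\neq\emptyset$. All the work lies in the second case.

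In that case, I would argue by contradiction, assuming that $\Gamma\cap S(\Gamma)$ reduces to a single point $p$. Since $S$ is a homeomorphism of the cylinder (as in all applications of interest in the paper), $S(\Gamma)$ is a Jordan curve, hence $S(\Gamma)\setminus\{p\}$ is homeomorphic to an open interval and, in particular, connected. Under our assumption that $p$ is the only intersection point, $S(\Gamma)\setminus\{p\}$ lies in $C\setminus\Gamma=R_+(\Gamma)\sqcup R_-(\Gamma)$, a disjoint union of two open sets, so it must be entirely contained in one of the two components $R_\pm(\Gamma)$. But choosing $q_\pm\in S(\Gamma)\cap R_\pm(\Gamma)$, which exist by the strong intersection property, yields points of $S(\Gamma)\setminus\{p\}$ lying in both components: indeed $q_\pm\neq p$ because $p\in\Gamma$ while the sets $R_\pm(\Gamma)$ are open and disjoint from $\Gamma$. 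This contradicts the connectedness just established, and hence $\Gamma\cap S(\Gamma)$ contains at least two points.

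The main obstacle is essentially bookkeeping: one needs to correctly invoke the fact that the non-contractible curve $\Gamma$ separates $C$ into the two open components $R_\pm(\Gamma)$ (implicit in the setup of the paper) and the elementary topological fact that a Jordan curve minus a point is a connected arc. Neither of these requires a real argument, so the whole proof collapses to the one-line contradiction sketched above. Should one wish to drop the assumption that $S$ is a homeomorphism, a slightly more delicate analysis of $S(\Gamma)$ as a continuous image of a circle would be needed, but under the standing hypotheses of the paper this extra subtlety does not arise.
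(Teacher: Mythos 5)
Your proof is correct and uses essentially the same ingredients as the paper's: the two points of $S(\Gamma)$ supplied by the strong intersection property, one in each component $R_\pm(\Gamma)$, together with the Jordan-curve structure of $S(\Gamma)$. The paper argues directly (the two arcs of $S(\Gamma)$ joining $p_-$ to $p_+$ must each cross $\Gamma$), while you argue by contradiction via the connectedness of $S(\Gamma)\setminus\{p\}$; this is a purely stylistic difference, and your remark that the intersection property already rules out the empty-intersection case closes the only loose end.
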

\begin{proof}
The case $\Gamma=S(\Gamma)$ is trivial. In the other case the strong intersection property implies that there exist two points $p_+\in R_+(\Gamma)$ and $p_-\in R_-(\Gamma)$ that are connected by an arc $\gamma\subset S(\Gamma)$. But $S(\Gamma)$ is a Jordan curve so there must exists another arc $\gamma_*\subset S(\Gamma)$ connecting $p_-$ to $p_+$ and crossing $\Gamma$ in a point $p_*\neq p$. 
\end{proof}

Now we are ready to prove two corollaries of theorem \ref{ortteo}  

\begin{cor}\label{cor1}
If in theorem \ref{ortteo} we require also that $S(\theta,r)$ is analytic then the set of the solutions of system (\ref{sist}) is either finite or the graph of an analytic $2\pi$-periodic function.   
\end{cor}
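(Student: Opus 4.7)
The plan is to use the twist condition to parametrise the first equation of system (\ref{sist}) by an analytic $2\pi$-periodic function $r=\phi(\theta)$, and then to read the second equation as the zero set of a single analytic function of one variable; the alternative in the statement then becomes the familiar dichotomy between identically zero and having isolated zeros.

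First, I would fix $\theta\in\mathbb{R}$ and look at the equation $F(\theta,r)=\theta+2N\pi$ as an equation in $r$. By the twist condition $\partial F/\partial r>0$, the map $r\mapsto F(\theta,r)$ is strictly increasing. Hypothesis (\ref{cond}) of Theorem \ref{ortteo} gives $F(\theta,a)<\theta+2N\pi<F(\theta,r_\theta)$, so by the intermediate value theorem there exists a unique $r=\phi(\theta)\in(a,r_\theta)$ solving this equation. Because $F$ is analytic and $\partial_r F>0$, the analytic implicit function theorem yields that $\phi$ is analytic on $\mathbb{R}$. The periodicity identity $F(\theta+2\pi,r)=F(\theta,r)+2\pi$ together with uniqueness forces $\phi(\theta+2\pi)=\phi(\theta)$.

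Next, I would substitute in the second equation of (\ref{sist}) and define
\[
h(\theta):=G(\theta,\phi(\theta))-\phi(\theta),
\]
which is analytic and $2\pi$-periodic. By construction, a point $(\theta,r)\in\Omega$ solves (\ref{sist}) if and only if $r=\phi(\theta)$ and $h(\theta)=0$. Hence the solution set of (\ref{sist}) coincides with the graph of $\phi$ restricted to the zero set of $h$.

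Finally, I would invoke the standard fact that a real analytic function on a connected open set is either identically zero or has a discrete zero set. If $h\equiv 0$, then every $(\theta,\phi(\theta))$ is a solution, so the solution set is the graph of the analytic $2\pi$-periodic function $\phi$, which is exactly the second alternative. If $h\not\equiv 0$, its zeros are isolated in $\mathbb{R}$; combined with $2\pi$-periodicity this forces only finitely many zeros in $[0,2\pi)$, giving the first alternative. The main (and essentially only) subtlety is ensuring that $\phi$ is globally defined and analytic on all of $\mathbb{R}$; this rests entirely on the twist condition being a \emph{strict} monotonicity together with condition (\ref{cond}) holding at every $\theta$, which is why the argument breaks down without these two hypotheses.
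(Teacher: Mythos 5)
Your argument is correct and follows essentially the same route as the paper: the twist condition together with condition (\ref{cond}) and the analytic implicit function theorem produce the analytic $2\pi$-periodic curve $r=\phi(\theta)$, and the dichotomy then reduces to the identity theorem for a real analytic function of one variable. The only (cosmetic) difference is that you phrase this via the zeros of $h(\theta)=G(\theta,\phi(\theta))-\phi(\theta)$, whereas the paper phrases it as the intersection of the graph of $\phi$ with its image graph $\phi_1=G(\cdot,\phi(\cdot))$ and additionally invokes Lemma \ref{interse1} to guarantee at least two intersection points, a fact not needed for the finite-versus-graph alternative itself.
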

\begin{proof}
By (\ref{cond}) and the twist condition, we get that for each $\theta$ the equation 
\begin{equation}\label{eqre}
F(\theta,r)=\theta+2N\pi
\end{equation} 
has a unique solution $r:=\phi(\theta)$. By the uniqueness we have that $\phi$ is $2\pi$-periodic. Moreover, because of the twist condition we can apply, for a fixed $\theta$, the analytic version of the implicit function theorem and get an open neighbourhood $U_\theta$ and an analytic function $\tilde{\phi}:U_\theta\rightarrow\mathbb{R}$ such that $F(\theta,\tilde{\phi}(\theta))=\theta+2N\pi$ on $U_\theta$. But, by uniqueness, $\phi(\theta)=\tilde{\phi}(\theta)$ on $U_\theta$. Repeating the argument for each $\theta$, we get that $\phi$ is also analytic. So, $S(\theta,\phi(\theta))$ is the graph of an analytic function in the cylinder: let us call it $\phi_1(\theta)$. This comes from the analyticity and the periodicity of $S$ and the fact that $\phi(\theta)$ satisfies equation (\ref{eqre}). So, by Lemma \ref{interse1}, $\phi$ and $\phi_1$ must intersect in at least two points of the cylinder that are the solutions of system (\ref{sist}) when elevated. Moreover, from the theory of analytic functions, we know that either the set $\{\theta\in[0,2\pi]:\phi(\theta)=\phi_1(\theta)\}$ is finite or $\phi(\theta)=\phi_1(\theta)\quad\forall\theta$.\\
\end{proof}

\begin{cor}\label{cor2}
Suppose that in theorem \ref{ortteo}, condition (\ref{cond}) is satisfied for some $N\in\mathbb{Z}$. Let $(\hat{\theta},\hat{r})$ be an isolated solution of system (\ref{sist}) and define the map $T(\theta,r)=(\theta+2\pi,r)$. Then $i (T^{-N}S,(\hat{\theta},\hat{r}))$ is either $-1$ or $0$ or $1$. 
\end{cor}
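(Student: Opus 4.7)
The plan is to reduce the two-dimensional fixed point index of $T^{-N}S$ at $(\hat\theta, \hat r)$ to a one-dimensional degree computation, using the twist condition to parametrize the level set $F(\theta,r)=\theta+2N\pi$. Since $T^{-N}S(\theta, r) = (F(\theta, r) - 2N\pi, G(\theta, r))$, the index $i(T^{-N}S, (\hat\theta, \hat r))$ equals (up to an irrelevant sign in dimension two) the Brouwer degree at $(\hat\theta, \hat r)$ of the planar vector field
$$ V(\theta, r) := (F(\theta, r) - \theta - 2N\pi,\ G(\theta, r) - r). $$
By the twist condition $\partial_r F > 0$ and the implicit function theorem, there exists a smooth $\phi$ on a neighborhood of $\hat\theta$ with $\phi(\hat\theta) = \hat r$ and $F(\theta, \phi(\theta)) \equiv \theta + 2N\pi$. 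Set $g(\theta) := G(\theta, \phi(\theta)) - \phi(\theta)$; the hypothesis that $(\hat\theta, \hat r)$ is isolated means $\hat\theta$ is an isolated zero of $g$.

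I then deform $V$ to the product vector field $V_\ast(\theta, r) := (r - \phi(\theta),\ g(\theta))$ via two straight-line homotopies followed by a shear. The first homotopy
$$ V^{(1)}_s(\theta, r) := \bigl( (1-s)(F(\theta, r) - \theta - 2N\pi) + s(r - \phi(\theta)),\ G(\theta, r) - r \bigr) $$
has a first component that is strictly increasing in $r$ for every $s \in [0, 1]$ (both summands are: the first by the twist condition, the second trivially), so its only $r$-zero is $r = \phi(\theta)$; the second component at that point reduces to $g(\theta)$, vanishing only at $\hat\theta$. Hence $(\hat\theta, \hat r)$ is the unique zero of $V^{(1)}_s$ throughout the homotopy on a small disk, and the degree is preserved. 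An analogous homotopy of the second component, keeping the first frozen at $r - \phi(\theta)$, deforms $V^{(1)}_1$ to $V_\ast$ by the same argument. Finally, the orientation-preserving shear $\xi := r - \phi(\theta)$ conjugates $V_\ast$ to $W(\theta, \xi) := (\xi, g(\theta))$, which has isolated zero $(\hat\theta, 0)$ and the same local degree.

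It remains to show $\deg(W, (\hat\theta, 0)) \in \{-1, 0, +1\}$. Parametrize a small circle around $(\hat\theta,0)$ by $(\hat\theta + \rho\cos\alpha,\ \rho\sin\alpha)$; then $W = (\rho\sin\alpha,\ g(\hat\theta + \rho\cos\alpha))$. A quadrant-by-quadrant inspection of the angular variation of this vector as $\alpha$ traverses $[0, 2\pi]$ shows that the total rotation equals $-2\pi$ if $g$ changes sign from negative to positive at $\hat\theta$, equals $+2\pi$ if it changes from positive to negative, and equals $0$ if $g$ has constant sign on each side of $\hat\theta$. This yields the three possible index values $-1$, $+1$, $0$ respectively.

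The only delicate point is admissibility of the two homotopies, and this is precisely where the twist condition is essential: it guarantees strict monotonicity in $r$ of the first component for every $s \in [0,1]$, which is what rules out spurious zeros in the deformation. Everything else reduces to routine degree theory together with the elementary angular count above.
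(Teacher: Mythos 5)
Your proposal is correct and follows essentially the same route as the paper: the Krasnosel'skii-style dimension reduction, using the twist condition $\partial_r F>0$ and the implicit function theorem to produce $\phi$ and the one-dimensional function $g=\Phi$, with admissibility of the homotopies guaranteed by strict monotonicity in $r$ of the first component. The only (harmless) difference is the final step, where you evaluate $\deg(\xi,g(\theta))$ by a direct winding-number count after a shear, while the paper reaches the same conclusion via a further homotopy to $(\tilde F(\hat\theta,r),\Phi(\theta))$ and the multiplicativity of the Brouwer degree.
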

\begin{proof}
First of all notice that $i (T^{-N}S,(\hat{\theta},\hat{r}))$ is well defined because $(\hat{\theta},\hat{r})$ is an isolated fixed point of $T^{-N}S$. To compute the index remember that 
$$
i (T^{-N}S,(\hat{\theta},\hat{r}))=\deg (T^{-N}S-Id, B_\delta(\hat{\theta},\hat{r})) 
$$
where $\deg$ indicates the Brouwer degree and $\delta$ could be chosen small enough by the excision property. So we will deal with the degree of the map
\begin{equation*}
\begin{split}
&(T^{-N}S-Id)(\theta,r)=(F(\theta,r)-2N\pi-\theta, G(\theta,r)-r)\\
&:=(\tilde{F}(\theta,r),\tilde{G}(\theta,r)):=\tilde{S}(\theta,r)
\end{split}
\end{equation*}
and to compute it we will use a technique by Krasnosel'skii \cite{kras} that allows to reduce the dimension.\\
By the hypothesis, the point $(\hat{\theta},\hat{r})$ is an isolated zero of $\tilde{F}(\theta,r)$ and by the twist condition $\frac{\partial\tilde{F}}{\partial r}=\frac{\partial F}{\partial r}>0$. So we can apply the implicit function theorem to the equation $\tilde{F}(\theta,r)=0$ and find a $C^1$ function $\phi(\theta)$ defined on a neighbourhood of $\hat{\theta}$ such that $\tilde{F}(\theta,\phi(\theta))=0$ and $\phi(\hat{\theta})=\hat{r}$. Hence it is well defined the function
$$
\Phi(\theta)=\tilde{G}(\theta,\phi(\theta))
$$  
which has $\hat{\theta}$ as an isolated zero.
Now consider the homotopy
\begin{equation*}
H((\theta,r),\lambda)=
\begin{sistema}
\lambda\tilde{F}(\theta,r)+(1-\lambda)(r-\phi(\theta))\\
\lambda\tilde{G}(\theta,r)+(1-\lambda)\Phi(\theta).
\end{sistema}
\end{equation*}
We claim that it is admissible i.e. $(\hat{\theta},\hat{r})$ is an isolated zero for every $\lambda$.
Indeed consider the system
\begin{equation}\label{omo}
\begin{sistema}
\lambda\tilde{F}(\theta,r)+(1-\lambda)(r-\phi(\theta))=0\\
\lambda\tilde{G}(\theta,r)+(1-\lambda)\Phi(\theta)=0.
\end{sistema}
\end{equation}
Because of the twist condition, if we define $\mathcal{F}(\theta,r,\lambda)=\lambda\tilde{F}(\theta,r)+(1-\lambda)(r-\phi(\theta))$, we have
$$
\frac{\partial\mathcal{F}}{\partial r}(\hat{\theta},\hat{r},\lambda)=\lambda\frac{\partial\tilde{F}}{\partial r}(\hat{\theta},\hat{r})+(1-\lambda)>0
$$
and so we can apply the implicit function theorem to solve the first equation in a neighbourhood of $\hat{\theta}$ and by the uniqueness the only solution is $r=\phi(\theta)$. Substituting it in the second equation we get, because of the definition of $\Phi(\theta)$,
$$
\lambda\tilde{G}(\theta,\phi(\theta))+(1-\lambda)\Phi(\theta)=0\Rightarrow \lambda\Phi(\theta)+(1-\lambda)\Phi(\theta)=0 \Rightarrow\Phi(\theta)=0
$$
that, remember, has $\hat{\theta}$ as an isolated solution. So $(\hat{\theta},\hat{r})$ is an isolated solution of system (\ref{omo}) and we can choose $\delta$ small enough such that $(\hat{\theta},\hat{r})$ is the only solution in $B_\delta(\hat{\theta},\hat{r})$.
So we are led to the computation of the degree of the map
$$
(\theta,r)\longmapsto (r-\phi(\theta),\Phi(\theta))
$$  
that, if $\Phi^\prime(\hat{\theta})\neq 0$, can be easily computed by linearization.\\
However it could happen that $\Phi^\prime(\hat{\theta})= 0$. So consider the other homotopy     
\begin{equation*}
H((\theta,r)\lambda)=
\begin{sistema}
\lambda\tilde{F}(\hat{\theta},r)+(1-\lambda)(r-\phi(\theta))\\
\Phi(\theta)
\end{sistema}
\end{equation*}
where $\hat{\theta}$ is fixed.
To prove that it is admissible, consider the system
\begin{equation*}
\begin{sistema}
\lambda\tilde{F}(\hat{\theta},r)+(1-\lambda)(r-\phi(\theta))=0\\
\Phi(\theta)=0.
\end{sistema}
\end{equation*}
By the definition of $\Phi(\theta)$ we have that $\hat{\theta}$ is an isolated solution of the second equation that, substituted in the first one, gives $\lambda\tilde{F}(\hat{\theta},r)+(1-\lambda)(r-\hat{r})=0$. We have that $\hat{r}$ is a solution and is also the only one, because by the twist condition we have
$$
\frac{\partial}{\partial r}[\lambda\tilde{F}(\hat{\theta},r)+(1-\lambda)(r-\hat{r})]>0.
$$  
So $(\hat{\theta},\hat{r})$ is an isolated solution of the system, the homotopy is admissible and we can compute the degree of the function
$$
W(\theta,r)=(\tilde{F}(\hat{\theta},r),\Phi(\theta)).
$$
To use the factorization property of the degree consider the function $L(x,y)=(y,x)$.
We have
$$
\deg(L\circ W, B_\delta(\hat{\theta},\hat{r}))=\deg(L,B_\delta(0,0))\deg(W, B_\delta(\hat{\theta},\hat{r}))=-\deg(W, B_\delta(\hat{\theta},\hat{r})).
$$
Now, by the factorization property
\begin{equation*}
\begin{split}
&\deg(W, B_\delta(\hat{\theta},\hat{r}))=-\deg(\tilde{F},I_{\hat{r}})\deg(\Phi ,I_{\hat{\theta}})\\=&-sign\{\frac{\partial F}{\partial  r}(\hat{\theta},\hat{r})\}\deg(\Phi ,I_{\hat{\theta}})=-\deg(\Phi ,I_{\hat{\theta}}).
\end{split}
\end{equation*}
The function $\Phi$ is defined in dimension 1 so its degree can be either $0$ or $1$ or $-1$.
Finally $i(T^{-N}S,(\hat{\theta},\hat{r}))$ can be either $0$ or $1$ or $-1$.   
\end{proof}

\begin{remark}\label{dise}
An intuitive idea of when these cases could occur is given 
by figure \ref{figint}. 

%
%

\begin{figure}\label{figint}
\centering
\subfigure[$i=+1$, $\Phi^\prime(\hat{\theta})\neq 0$]
{\includegraphics[width=6cm]{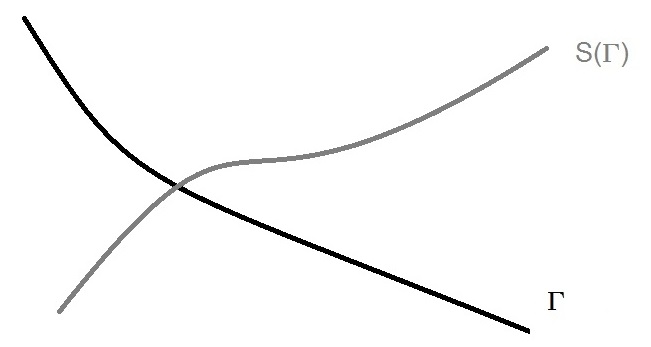}}
\hspace{5mm}
\subfigure[$i=+1$, $\Phi^\prime(\hat{\theta})= 0$]
{\includegraphics[width=6cm]{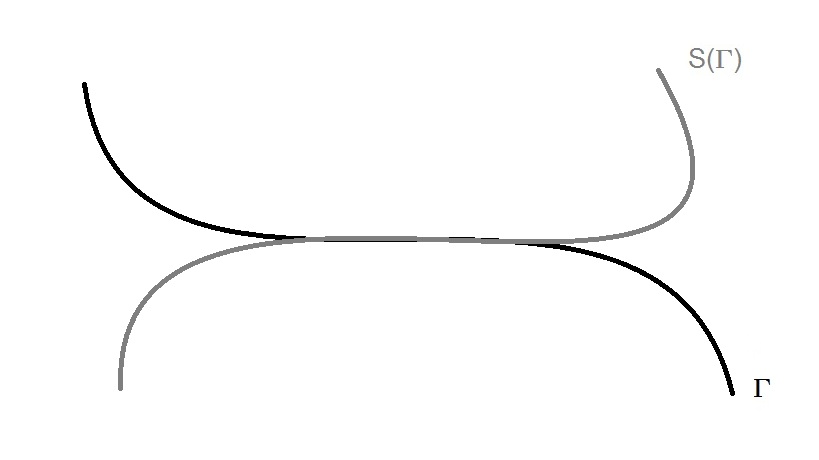}}
\hspace{5mm}
\subfigure[$i=0$, $\Phi^\prime(\hat{\theta})= 0$]
{\includegraphics[width=6cm]{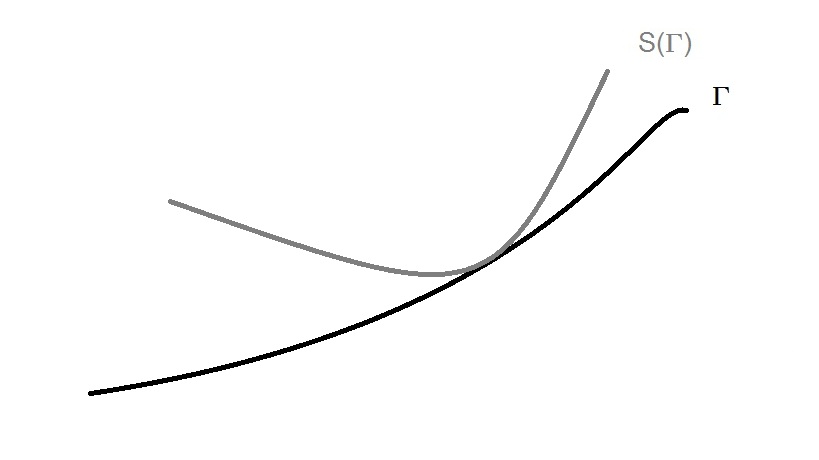}}
\hspace{5mm}
\subfigure[$i=-1$, $\Phi^\prime(\hat{\theta})= 0$]
{\includegraphics[width=6cm]{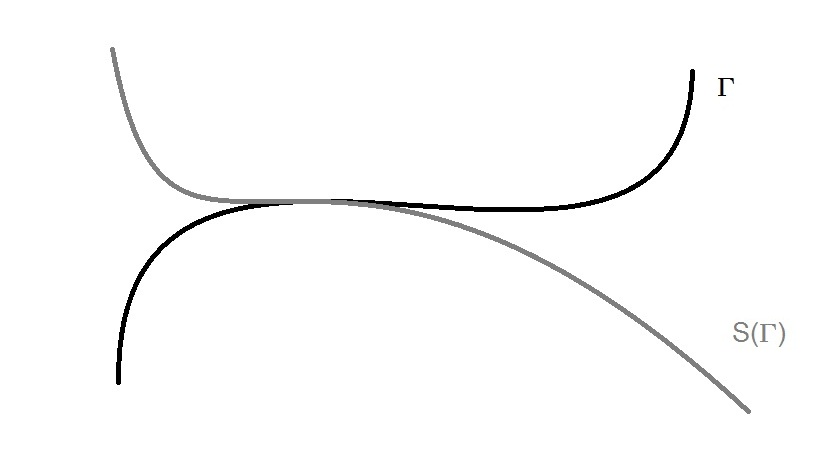}}
\hspace{5mm}
\subfigure[$i=-1$, $\Phi^\prime(\hat{\theta})\neq 0$]
{\includegraphics[width=6cm]{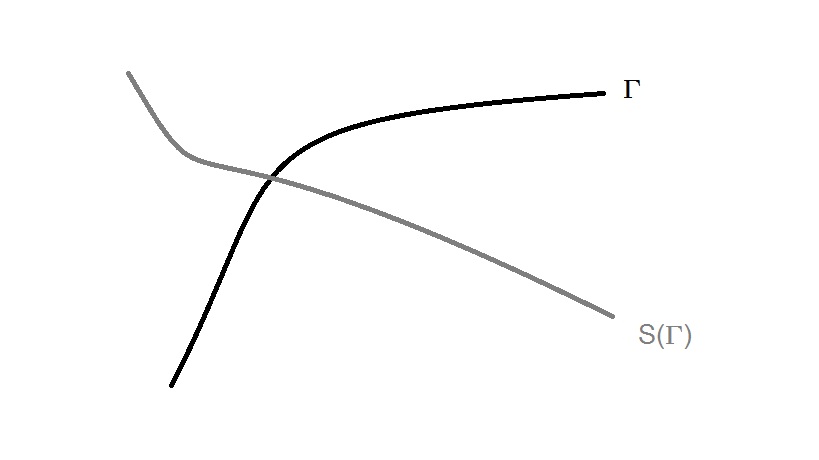}}
\caption{Possibilities of intersections}
\end{figure}
\end{remark}

\newpage

\section{Applications to the Poincaré map induced by a forced relativistic pendulum}

\subsection{Proof of theorem \ref{introma}}
We are looking for $T$-periodic solutions with winding number $N$ of the equation  

\begin{equation}\label{rpend}
\frac{d}{dt}\left(\frac{\dot{x} }{ \sqrt{ 1- \dot{x}^2 } }\right) + a\sin x=f(t),
\end{equation}
under the hypothesis previously stated in the introduction.\\
First of all notice that physical intuition suggests that it should not be possible to have such solutions for every $N$ and $T$, because of the bound given by the speed of light. This is a necessary condition that holds for a larger class of equations, namely: 
\begin{prop}\label{neces}
Let $x(t)$ be a $T$-periodic solution with winding number $N$ of
\begin{equation}\label{pengengen}
\frac{d}{dt}\left(\frac{\dot{x} }{ \sqrt{ 1- \dot{x}^2 } }\right) =F(t,x)
\end{equation}
where $F(t,x)$ is continuous and $T$-periodic in $t$.\\ 
Then 
\begin{equation}\label{condnec}
\left|\frac{2N\pi}{T}\right|<1.
\end{equation}
\end{prop}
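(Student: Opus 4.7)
The plan is to observe that the very structure of the equation forces $|\dot x(t)|<1$ pointwise, and then combine this with the integral identity coming from the winding number. First I would note that for $x(t)$ to be a (classical) solution of (\ref{pengengen}), the momentum-like quantity $\dot x/\sqrt{1-\dot x^2}$ must be defined and continuously differentiable on $\mathbb{R}$; this forces $|\dot x(t)|<1$ for every $t\in\mathbb{R}$. Since $\dot x$ is continuous and $T$-periodic (by differentiating the identity $x(t+T)=x(t)+2N\pi$), the function $|\dot x|$ attains its maximum on the compact interval $[0,T]$ at some point, and by pointwise strict inequality this maximum $M$ satisfies $M<1$.

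Next I would exploit the winding condition. Integrating $\dot x$ over one period and using $x(T)-x(0)=2N\pi$ gives
\begin{equation*}
\int_0^T \dot x(t)\,dt = 2N\pi,
\end{equation*}
so the mean value of $\dot x$ over $[0,T]$ is $2N\pi/T$. By the mean value theorem for continuous functions applied to $\dot x$ on $[0,T]$, there exists $t^*\in[0,T]$ with $\dot x(t^*)=2N\pi/T$. Combining with the bound of the previous step,
\begin{equation*}
\left|\frac{2N\pi}{T}\right|=|\dot x(t^*)|\le M<1,
\end{equation*}
which is exactly (\ref{condnec}).

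There is really no hard step here; the only point worth being careful about is the strictness of the inequality, which is why one needs compactness of $[0,T]$ and continuity of $\dot x$ to upgrade the pointwise bound $|\dot x(t)|<1$ to a uniform bound $M<1$. Note that no use is made of the specific form of $F(t,x)$ beyond continuity and $T$-periodicity in $t$, which is consistent with the statement that the obstruction is purely kinematic (the "speed of light" bound) rather than dynamical.
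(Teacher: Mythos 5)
Your proof is correct and follows essentially the same route as the paper, which applies Lagrange's mean value theorem to $x$ on $[t,t+T]$ to get $2N\pi=\dot x(c)T$ and then invokes the pointwise constraint $|\dot x|<1$ forced by the equation. The only difference is cosmetic (you pass through the integral of $\dot x$ and the mean value theorem for integrals), and your compactness step producing the uniform bound $M<1$ is actually superfluous: the strict pointwise inequality $|\dot x(t^*)|<1$ at the single point $t^*$ already yields strictness.
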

\begin{proof}
By Lagrange theorem we get
\begin{equation*}
\begin{split}
|2N \pi | = | x(t+T)-x(t)| = |\dot{x}(c)T|  
\end{split}
\end{equation*} 
for some $c\in(t,t+T)$.
But the domain of equation (\ref{pengengen}) is $|\dot{x}(t)|<1$ for all $t$, so
$$
|2N\pi|<T.
$$
\end{proof}

\noindent In this section we will see why the relativistic condition (\ref{condnec}) is also sufficient to have $T$-periodic solutions with winding number $N$. The proof will be an application of theorem \ref{franks} considering $\tilde{S}$ the Poincaré map.\\

\noindent First of all, let us perform the change of variables
\begin{equation}\label{cvper}
y(t)=x(t)-\frac{2N\pi}{T}t.
\end{equation}
Notice that in this way $y(t+T)=y(t)$ and $T$-periodic solutions with winding number $N$ of (\ref{rpend}) correspond to classical $T$-periodic solution of
\begin{equation}\label{pendgen2}
\frac{d}{dt}\left(\frac{\dot{y}+\frac{2N\pi}{T} }{ \sqrt{ 1- (\dot{y}+\frac{2N\pi}{T})^2 } }\right) + a\sin \left(y+\frac{2N\pi}{T}t\right)=f(t).
\end{equation}
     
\noindent We will find $T$-periodic solutions of equation (\ref{pendgen2}) as fixed points of the Poincaré map.\\
Equation (\ref{pendgen2}) can be seen as the Euler-Lagrange equation coming from the Lagrangian
$$
L(y,\dot{y},t)=-\sqrt{ 1- (\dot{y}+\frac{2N\pi}{T})^2 }+a\cos \left(y+\frac{2N\pi}{T}t\right)+f(t)y
$$
and presents singularities.
On the other hand, if we perform the change of variables given by the Legendre Transform
\begin{equation}\label{cv}
\begin{sistema}
q=y\\
p=\frac{\partial L}{\partial\dot{y}}=\frac{\dot{y}+\frac{2N\pi}{T}}{\sqrt{1-(\dot{y}+\frac{2N\pi}{T})^2}},
\end{sistema}
\end{equation}
we get the Hamiltonian 
$$H(q,p,t)=[p\dot{y}-L(q,\dot{x},t)]_{\dot{y}=\dot{y}(p)}=\sqrt{p^2+1}-\frac{2N\pi}{T}p-a\cos (q+\frac{2N\pi}{T}t)-f(t)q$$
and the new Hamiltonian system
\begin{equation}\label{rpend2}
\begin{sistema}
\dot{q}=\frac{p}{\sqrt{1+p^2}}-\frac{2N\pi}{T}\\
\dot{p}=-a\sin \left(q+\frac{2N\pi}{T}t\right)+f(t).
\end{sistema}
\end{equation} 
that has no singularities and has solutions globally defined.\\
\smallskip
\noindent Now let $S(q_0,p_0)=(Q(q_0,p_0),P(q_0,p_0))=(q(T,q_0,p_0),p(T,q_0,p_0))$ be the Poincaré map associated to system (\ref{rpend2}) that is well defined because of the boundedness of the second term. First of all notice that the Poincaré map is isotopic to the inclusion. The isotopy is simply given by the flow $\phi$, in fact we have the isotopy
$$
\phi(tT,q_0,p_0)\quad t\in[0,1].
$$
Notice that this isotopy is valid also in the cylinder because by periodicity we have $$\phi(t,q_0+2\pi,p_0)=\phi(t,q_0,p_0)+(2\pi,0).$$ 
Moreover, inspired by \cite{ortegakunze} we can prove
\begin{lemma}\label{l6}
The Poincaré map $S$ is exact symplectic in the cylinder.
\end{lemma}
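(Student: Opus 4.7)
The plan is to construct an explicit primitive $V$ via the Poincar\'e--Cartan action along trajectories, and to use the zero-mean assumption on $f$ to secure $2\pi$-periodicity in $q_0$. Concretely, I would set
\begin{equation*}
V(q_0,p_0)=\int_0^T\bigl[p(t)\dot q(t)-H(q(t),p(t),t)\bigr]\,dt,
\end{equation*}
where $(q(t),p(t))$ denotes the solution of (\ref{rpend2}) with initial datum $(q_0,p_0)$; the required regularity of $V$ follows from smooth dependence of the flow on initial data.

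The first step is to verify the identity $dV=P\,dQ-p_0\,dq_0$. I would differentiate under the integral sign with respect to $(q_0,p_0)$, substitute Hamilton's equations so as to cancel the $H_q\,\delta q+H_p\,\delta p$ contribution against $\dot p\,\delta q+\dot q\,\delta p$, and observe that what remains is the total time derivative $\frac{d}{dt}(p\,\delta q)$. Integrating from $0$ to $T$ produces precisely the boundary term $P\,dQ-p_0\,dq_0$; this is essentially Hamilton's principle.

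The second, and key, step is the periodicity $V(q_0+2\pi,p_0)=V(q_0,p_0)$ required for exactness on the cylinder. Both $H_p$ and $H_q$ are $2\pi$-periodic in $q$, so the shift $q_0\mapsto q_0+2\pi$ translates the entire trajectory by $2\pi$ in $q$ while leaving $p$ and $\dot q$ unchanged. The Hamiltonian itself, however, is \emph{not} $2\pi$-periodic in $q$: the linear ``work'' term $-f(t)q$ gives
\begin{equation*}
H(q+2\pi,p,t)=H(q,p,t)-2\pi f(t),
\end{equation*}
and substituting this into the defining integral yields $V(q_0+2\pi,p_0)-V(q_0,p_0)=2\pi\int_0^T f(t)\,dt$.

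The only subtle point is this last identity: the term $-f(t)q$ in $H$ is exactly what would destroy exact symplecticity on the cylinder, and the standing hypothesis $\int_0^T f(t)\,dt=0$ is tailor-made to kill the residue. Once this cancellation is recognised, the lemma follows.
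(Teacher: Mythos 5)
Your proposal is correct and coincides with the paper's own argument: the paper defines exactly the same generating function $V(\theta,r)=\int_0^T[p\dot q-H]\,dt$, obtains $dV=p_1dq_1-p\,dq$ by differentiating under the integral, using Hamilton's equations and integrating by parts, and derives the $2\pi$-periodicity of $V$ from the periodicity of the flow together with the zero mean value of $f$. No substantive difference.
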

\begin{proof}
Let us call $\theta=q_0, r=p_0$ and $K=\frac{2N\pi}{T}$.
Consider the function
\begin{equation*}
\begin{split}
& V(\theta,r)=\int_0^T [p(\frac{p}{\sqrt{1+p^2}}-K)-H(q,p,t)]dt   \\
&=\int_0^{T}[-\sqrt{\frac{1}{1+p^2(t,\theta,r)}}+a\cos (q(t,\theta,r)+Kt)+f(t)q(t,\theta,r)] dt.
\end{split}
\end{equation*}
First of all, it follows from the periodicity of (\ref{pendgen2}) and the change of variables (\ref{cv}) that $q(t,\theta+2\pi, r)=q(t,\theta, r)+2\pi$ and $p(t,\theta+2\pi,r)=p(t,\theta,r)$. Hence we have
\begin{equation*}
V(\theta+2\pi,r)=V(\theta, r)
\end{equation*}
using the hypothesis of the null mean value of $f$.
Now let us compute the differential $dV$.
We have
\begin{equation}\label{htf}
\begin{split}
V_\theta= & \int_0^T[\frac{p}{(1+p^2)^{3/2}} \frac{\partial p}{\partial\theta}+(-a\sin (q+Kt)+f(t))\frac{\partial q}{\partial\theta}] dt \\
   &=\int_0^T[\frac{p}{(1+p^2)^{3/2}} \frac{\partial p}{\partial\theta}+\dot{p}\frac{\partial q}{\partial\theta}] dt
\end{split}
\end{equation}
using the second equation in (\ref{rpend2}).
Now, integrating by parts and using the first equation in (\ref{rpend2}) we get
\begin{equation*}
\int_0^{T}\dot{p} \frac{\partial q}{\partial\theta}dt=[p\frac{\partial q}{\partial\theta}]_0^{T}-\int_0^{T} p \frac{\partial \dot{q}}{\partial\theta} dt=[p\frac{\partial q}{\partial\theta}]_0^{T}-\int_0^T \frac{p}{(1+p^2)^{3/2}} \frac{\partial p}{\partial\theta}
\end{equation*}
that, substituted in (\ref{htf}) gives
$$
V_\theta=p(T)\frac{\partial q}{\partial\theta}(T)- p(0)\frac{\partial q}{\partial\theta}(0).
$$
Analogously we can get 
$$
V_r=p(T)\frac{\partial q}{\partial r}(T)- p(0)\frac{\partial q}{\partial r}(0).
$$  
Hence $dV=p_1dq_1-pdq$, that means that the function $V$ will satisfy the thesis.
\end{proof}
Finally, the relativistic effect will give also the boundary twist condition:

\begin{lemma}\label{l4}
If $|\frac{2N\pi}{T}|<1$ then there exists $\tilde{p}>0$ and $r>0$ such that 
$$
Q(q,-\tilde{p})-q<-r\quad \mbox{and} \quad Q(q,\tilde{p})-q>r.
$$
\end{lemma}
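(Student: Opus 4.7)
The plan is to exploit two facts: (i) $\dot p$ is uniformly bounded, so $p(t,q,p_0)$ stays close to $p_0$ on $[0,T]$; (ii) when $|K|<1$, where $K=2N\pi/T$, the right-hand side $p/\sqrt{1+p^2}-K$ of the first equation in (\ref{rpend2}) has a definite sign bounded away from zero once $|p|$ is large. Combining these gives the desired twist, for $\tilde p$ chosen large enough.

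Concretely, set $M=T(a+\max_{[0,T]}|f|)$. Since $|\dot p|\le a+|f(t)|$ for every solution of (\ref{rpend2}), we have $|p(t,q,p_0)-p_0|\le M$ for every $t\in[0,T]$ and every initial point $(q,p_0)$. Next let $\epsilon=\tfrac12(1-|K|)$; by hypothesis $\epsilon>0$, and $-1<K-\epsilon<K+\epsilon<1$. Using $p/\sqrt{1+p^2}\to\pm1$ as $p\to\pm\infty$, choose $p^{*}>0$ so that
$$\frac{p}{\sqrt{1+p^{2}}}>K+\epsilon\ \text{ for }p\ge p^{*},\qquad\frac{p}{\sqrt{1+p^{2}}}<K-\epsilon\ \text{ for }p\le -p^{*},$$
and put $\tilde p=p^{*}+M$.

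If $p_0=\tilde p$, then $p(t)\ge\tilde p-M=p^{*}$ throughout $[0,T]$, so $\dot q(t)>\epsilon$ on $[0,T]$; integrating yields $Q(q,\tilde p)-q>\epsilon T$. The symmetric choice $p_0=-\tilde p$ forces $p(t)\le-p^{*}$, hence $\dot q(t)<-\epsilon$, and $Q(q,-\tilde p)-q<-\epsilon T$. Taking $r=\epsilon T$ finishes the proof. I do not expect any genuine obstacle: the only conceptual point is that the \emph{relativistic} cap $|p/\sqrt{1+p^{2}}|<1$ is precisely what makes the hypothesis $|K|<1$ both necessary and sufficient, matching Proposition \ref{neces}; in a non-relativistic setting, the unbounded factor would make any analogous boundary twist automatic.
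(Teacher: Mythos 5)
Your proof is correct and follows essentially the same route as the paper: bound the variation of $p$ over $[0,T]$ using the boundedness of $\dot p$, then use $|K|<1$ and the saturation of $p/\sqrt{1+p^2}$ to force a definite sign on $\dot q$. The only (welcome) refinement is that by building in the margin $\epsilon=\tfrac12(1-|K|)$ you obtain the uniform constant $r=\epsilon T$ explicitly, whereas the paper first gets $\dot q<0$ and then appeals to a standard compactness argument to produce $r$.
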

\begin{proof}
Let us prove the first inequality, being the second similar.
Let us call $K:=\frac{2N\pi}{T},\: |K|<1$ and consider the function, coming from system (\ref{rpend2}),
$$
A(p)=\frac{p}{\sqrt{p^2+1}}.
$$
We have that $A(p)$ is an odd increasing function such that $A(0)=0$ and $\lim_{p\to\pm\infty}A(p)=\pm 1$.
Since $|K|<1$, by continuity, we can find $\hat{p}>0$ such that
\begin{equation*}
\begin{sistema}
A(p)>K\quad\mbox{for}\:p>\hat{p} \\
A(p)<K\quad\mbox{for}\:p<-\hat{p}.
\end{sistema}
\end{equation*}
Now, integrating the second equation of (\ref{rpend2}) we get, for $t\in[0,T]$
$$
p(t)=p_0-\int_0^t a \sin (q(s)+K)ds + \int_0^t f(s)ds\leq p_0 +t(a+\|f\|_\infty)
$$
and so we can find $\tilde{p}>0$ large enough so that if $p_0 < -\tilde{p}$ then $p(t)<-\hat{p}$ for $t\in [0,T].$
It means
$$
\dot{q}(t)=\frac{p(t)}{\sqrt{1+p^2(t)}}-\frac{2N\pi}{T}<0\quad t\in[0,T]
$$
that is $q(t)$ is decreasing if $t\in [0,T]$ so,
$$Q(q_0,-\tilde{p})=q(T,q_0,-\tilde{p})<q(0,q_0,-\tilde{p})=q_0.$$
Now a standard compactness argument concludes the proof.
\end{proof}

\noindent Now it is straightforward the application of theorem \ref{franks} choosing the strip $\tilde{A}=\mathbb{R}\times[-\tilde{p},\tilde{p}]$ and the fact that solutions of system (\ref{rpend2}) are globally defined implies that we can find a larger strip $\tilde{B}$ such that $S(\tilde{A})\subset int \tilde{B}$. Finally we have that the right-hand side of (\ref{rpend2}) is analytic in $(q,p)$ then, by analytic dependence on initial conditions, also the Poincaré map is analytic. Notice that we do not need the analyticity of $f$ \cite[p.44]{lef}. So, using corollary \ref{inst} we get the instability of one solution. \\
Then theorem \ref{introma} is proved.

\begin{remark}
Similar results on the classical pendulum have been obtained by Franks in \cite[Proposition 5.1]{franks}. He proved the existence of fixed points for the Poincaré map using his version of the Poincaré-Birkhoff theorem and affirmed that they should have positive or negative index. This result needs some clarification. In fact there is another possibility: there could be only a continuum of fixed points and the fixed point index could not be defined. Consider the equation of the classical pendulum: the existence or not of forcing terms $f$ of null mean value such that the periodic solutions are represented only by a continuum in still an open problem. Anyway, as a related example consider the equation 
$$
\ddot{y}+a\sin(y +\frac{2\pi}{T}t)=0
$$
where the potential depends on time.    
Its $T$-periodic solutions correspond, via the change of variables $x=y+\frac{2\pi}{T}t$ to solutions $x(t)$ of
$$
\ddot{x}+a\sin x=0
$$
such that $x(t+T)=x(t)+2\pi$. These solutions forms the graph of a function in the phase space, so it is impossible to define the index.
\end{remark}

\subsection{Proof of theorem \ref{introtw} and consequences}
We will prove the result for the Poincaré map of system (\ref{rpend2}). In the case $a=\pi^2/T^2$ we will have to prevent the function $f$ from being the trigonometric function $a\sin (\frac{2N\pi}{T}t)$.\\
\smallskip 
\noindent According to the previous section let us call $S(q_0,p_0)=(Q(q_0,p_0),P(q_0,p_0))=(q(T,q_0,p_0),p(T,q_0,p_0))$ the Poincaré map associated to system (\ref{rpend2}) in which we suppose $a\leq\pi^2/T^2$. 
We have to prove that $\frac{\partial q}{\partial p_0}(T,q_0,p_0)>0$.\\
If we call
$$
x(t)=\frac{\partial q}{\partial p_0}(t,q_0,p_0)\quad y(t)=\frac{\partial p}{\partial p_0}(t,q_0,p_0)
$$
we know from the elementary theory of ODEs that the vector $(x(t),y(t))$ satisfies the variational equation
$$
\begin{sistema}
\dot{x}=\frac{1}{(1+p^2(t,q_0,p_0))^{3/2}}y\\
\dot{y}=-a\cos (q(t,q_0,p_0)+\frac{2N\pi}{T}t)x\\
x(0)=0\\
y(0)=1
\end{sistema}
$$
that is equivalent to the problem
\begin{equation}\label{sturm11}
\begin{sistema}
\frac{d}{dt}(\dot{x}(1+p^2(t,q_0,p_0))^{3/2}) +a\cos (q(t,q_0,p_0)+\frac{2N\pi}{T}t)x=0\\
x(0)=0\\
\dot{x}(0)=(\frac{1}{p_0^2+1})^{3/2}.
\end{sistema}
\end{equation}
Now consider the equation
\begin{equation}\label{sturm21}
\ddot{z}+\frac{\pi^2}{T^2}z=0
\end{equation}
and first suppose that $a<\pi^2/T^2$. In this case we have that 	
\begin{equation*}
(1+p(t)^2)^{3/2}\geq 1\quad \mbox{and}\quad a\cos (q+\frac{2N\pi}{T}t)\leq a<\pi^2/T^2
\end{equation*}
then (\ref{sturm21}) is a strict Sturm majorant of (\ref{sturm11}). So the Sturm theory and the fact that the function $z(t)=\sin(t\frac{\pi}{T})$ is a solution of (\ref{sturm21}), prove that $x(T)>0$ and the thesis will follow.\\
Now consider the case $a=\pi^2/T^2$. 
First of all the hypothesis $f(t)\neq a\sin(\frac{2N\pi}{T}t)$ prevents $q=2k\pi$ from being a solution. This means that there exists an open subset of positive measure of $[0,T]$ on which $q\neq 2k\pi$ and so
$$
\int_0^{T}\frac{\pi^2}{T^2}\cos q(t)dt < \int_0^{T}\frac{\pi^2}{T^2}dt.
$$ 
In this case we can use a generalization of the classical Sturm separation theorem. It can be achieved adapting the classical proof (cf. \cite{hartman}) to our framework. Consider the argumentum $\theta_1$ and $\theta_2$ respectively of (\ref{sturm11}) and (\ref{sturm21}) coming from the Prufer change of variables; then  we can conclude that $\theta_1(T)>\theta_2(T)$. Remembering that in this framework we have that $x(\tilde{t})=0\Leftrightarrow \theta(\tilde{t})=k\pi$ for some $k\in\mathbb{Z}$ and we are rotating in the clockwise sense, we can conclude using the same argumentation of the previous case translated into the phase-space $(x,p\dot{x})$.    
The proof of theorem \ref{introtw} is complete.\\

\begin{remark}
The condition $a\leq \frac{\pi^2}{T^2}$ is optimal. Indeed suppose $a>\frac{\pi^2}{T^2}$ and consider the autonomous system 
\begin{equation*}
\begin{sistema}
\dot{q}=\frac{p}{\sqrt{1+p^2}}-\frac{2N\pi}{T}\\
\dot{p}=-a\sin (q+\frac{2N\pi}{T}t).\\
\end{sistema}
\end{equation*} 
Notice that $(p=0,q=-\frac{2N\pi}{T}t)$ is an obvious solution. As before consider the variational equation
$$ 
\frac{d}{dt}(\dot{x}(1+p^2(t,q_0,p_0))^{3/2}) +a\cos (q(t,q_0,p_0)+\frac{2N\pi}{T}t)x=0.
$$
Notice that evaluated in the above solution it is nothing but
$$
\ddot{x}+ax=0.
$$
Using Sturm comparison with $\ddot{y}+\frac{\pi^2}{T^2}y=0$ we can conclude analogously as before that $x(T)<0$: it means that we do not have the twist condition. Finally note that in the case $a=\frac{\pi^2}{T^2}$ we have $x(T)=0$ and again the twist condition fails. 
\end{remark}     

\noindent The fact that the Poincaré map satisfies the twist condition allows us to enter in the huge chapter of twist maps. In particular we will get some more results on equation (\ref{introeq}). To state it remember that a $T$-periodic solution with winding number $N$ is said to be \emph{isolated} if there exists $\delta>0$ such that every solution $(q(t),p(t))$ satisfying
\begin{equation*}
0<|q(0)-\hat{q}(0)|+|p(0)-\hat{p}(0)|<\delta
\end{equation*}
is not $T$-periodic with winding number $N$.\\
We have
\begin{teo}\label{gperteo}
If $0<a<\pi^2/T^2$ either the number of isolated $T$-periodic solutions with winding number $N$ is finite or we are in the degenerate case and every degenerate solution is unstable. Moreover, in the first situation, the index of such solution is either $-1$ or $0$ or $1$.  
If we consider the case $a=\pi^2/T^2$ and we add the hypothesis that $f(t)$ is not the trigonometric function $a\sin(\frac{2N\pi}{T}t)$, we get the same results.
\end{teo}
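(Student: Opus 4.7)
The plan is to derive Theorem~\ref{gperteo} as a direct consequence of Corollaries~\ref{cor1} and \ref{cor2} applied to the Poincar\'e map $S$ associated with system (\ref{rpend2}), together with an instability argument based on Ortega~\cite{ortega1} in the degenerate case. The ingredients needed for the two corollaries have essentially all been collected in the previous subsections; what remains is to assemble them and to treat the degenerate alternative separately.

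\medskip

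First I would verify that the hypotheses of Theorem~\ref{ortteo} hold for the Poincar\'e map $S$ with the choice $N=0$, on the strip $\Omega=\mathbb{R}\times(-\tilde{p},\tilde{p})$ provided by Lemma~\ref{l4}. Exact symplecticity is Lemma~\ref{l6}; the twist condition $\partial Q/\partial p_{0}>0$ is Theorem~\ref{introtw} (under the standing hypothesis $a<\pi^{2}/T^{2}$, or $a=\pi^{2}/T^{2}$ together with the exclusion of the trigonometric forcing); the condition (\ref{cond}) with $N=0$ reduces to $F(\theta,-\tilde{p})<\theta<F(\theta,\tilde{p})$, which is exactly the boundary twist of Lemma~\ref{l4}. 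Finally, analytic dependence on initial data makes $S$ analytic.

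\medskip

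Next I would apply Corollary~\ref{cor1}. Through the transformations (\ref{cvper}) and (\ref{cv}), the $T$-periodic solutions with winding number $N$ of (\ref{rpend}) are in bijection with the solutions of system (\ref{sist}) for the map $S$. Corollary~\ref{cor1} then tells us that this set is either finite or coincides with the graph of an analytic $2\pi$-periodic function $\phi$. Because (\ref{rpend2}) is invariant under $q\mapsto q+2\pi$, the second alternative exactly matches the definition of degenerate case given in the introduction, since $s\mapsto(q_{s}(0),p_{s}(0))=(s,\phi(s))$ fulfils all the required properties. When we are in the first alternative, Corollary~\ref{cor2} applied with $N=0$ delivers immediately that $i(S,(\hat q,\hat p))\in\{-1,0,1\}$ for every isolated fixed point, which is the claimed bound on the index.

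\medskip

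The only step requiring actual work is the instability of every degenerate solution, and this is what I expect to be the main obstacle. I would argue by contradiction using Ortega's theorem \cite{ortega1}, exactly in the spirit of Corollary~\ref{inst}. If a fixed point $(\hat q,\hat p)$ of the orientation- and area-preserving analytic injection $S$ were stable in the sense of Liapunov, then either $S$ would coincide with the identity in a neighbourhood of $(\hat q,\hat p)$, or there would exist a sequence of Jordan curves $\Gamma_{n}\to(\hat q,\hat p)$ with $\Gamma_{n}\cap\mathrm{Fix}(S)=\emptyset$. The first alternative is ruled out by the twist condition $\partial Q/\partial p_{0}>0$, which prevents $Q(q,p)-q$ from vanishing identically on any open set. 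For the second, by continuity of $\phi$ the graph of $\phi$ passes through $(\hat q,\hat p)$ and connects the points $(\hat q\pm\varepsilon,\phi(\hat q\pm\varepsilon))$, which for $n$ large lie outside the bounded component $\hat\Gamma_{n}$; hence the graph must cross $\Gamma_{n}$, contradicting $\Gamma_{n}\cap\mathrm{Fix}(S)=\emptyset$. Therefore every point on the degenerate curve is unstable, which closes the proof.
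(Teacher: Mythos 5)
Your proposal is correct and follows essentially the same route as the paper: verify the hypotheses of Theorem \ref{ortteo} for the Poincar\'e map of (\ref{rpend2}) via Lemmas \ref{l6} and \ref{l4} and the twist condition, invoke Corollaries \ref{cor1} and \ref{cor2}, and translate back through (\ref{cvper}). You are in fact more explicit than the paper's own proof on one point: the instability of degenerate solutions is not argued there (it is only implicit in the non-isolated-fixed-point case of the proof of Corollary \ref{inst}), whereas your contradiction argument via Ortega's theorem, with the identity alternative excluded by the twist condition, spells it out correctly.
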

\begin{proof}
By the twist condition we can apply the results of section \ref{sectw}. In particular theorem \ref{ortteo} runs with $\Omega=\{(q,p)\in\mathbb{R}^2: -\tilde{p}<p<\tilde{p}\}$ where $\tilde{p}$ comes from Lemma \ref{l4}. Indeed, if we take $r_\theta=\tilde{p}-\epsilon$ with $\epsilon$ sufficiently small, condition (\ref{cond}) holds with $N=0$ by continuous dependence, and from the previous section we have that the Poincaré map is exact symplectic. This is another way to find two periodic solutions. Notice that it is a weaker result because we need the restriction on the parameter $a$.

\noindent Anyway the Poincaré map is analytic, so, by corollary \ref{cor1}, we have that fixed points either are isolated or form the graph of an analytic $2\pi$-periodic function. Moreover by corollary \ref{cor2} we have the informations on the degree.

\noindent The translation of these results from the Poincaré map to the differential equation gives informations on the periodic solutions of system (\ref{rpend2}) and, by the change of variables (\ref{cvper}) we get analogous results on the $T$-periodic solutions with winding number $N$ of system (\ref{rpend}).
\end{proof}

\section{The autonomous case}
Finally, consider the case $f=0$, i.e. the autonomous equation
\begin{equation}\label{pendau}
\frac{d}{dt}\left(\frac{\dot{x} }{ \sqrt{ 1- \dot{x}^2 } }\right) + a\sin x=0
\end{equation}
with $a\leq \frac{\pi^2}{T^2}$, that can be treated with a phase portrait analysis.
Let us consider the case $ T=2\pi $, so that $ a \leq 1/4$.\\
\smallskip
\noindent First of all it is easily seen that the points $(k\pi,0)$, $k\in\mathbb{Z}$ are constant solutions in the phase space $(x,\dot{x})$.
This analysis is quite simple because the energy
\begin{equation}\label{enau}
E(x,\dot{x})=\frac{1}{\sqrt{1-\dot{x}^2}}-a\cos x+a
\end{equation}
is a first integral and we suddenly reach the conditions 
\begin{equation}\label{condau}
E\geq 1\quad \mbox{and}\quad -1<\dot{x}<1.
\end{equation} 
Remembering that $a\leq 1/4$ we get the phase portrait in figure \ref{ph} where we have the constant solution $(0,0)$ for $E=1$, periodic orbits for $1<E<1+2a$, the heteroclinic orbits for $E=1+2a$ and the unbounded solutions for $E>1+2a$. Moreover, from the first integral (\ref{enau}) we can see the velocity as a function of the time and energy and $\dot{x}\to\pm 1$ as $E\to+\infty$ depending on the sign of $\dot{x}(0)$.

\begin{figure}[h]\label{ph}
\centering
\includegraphics[scale=0.5]{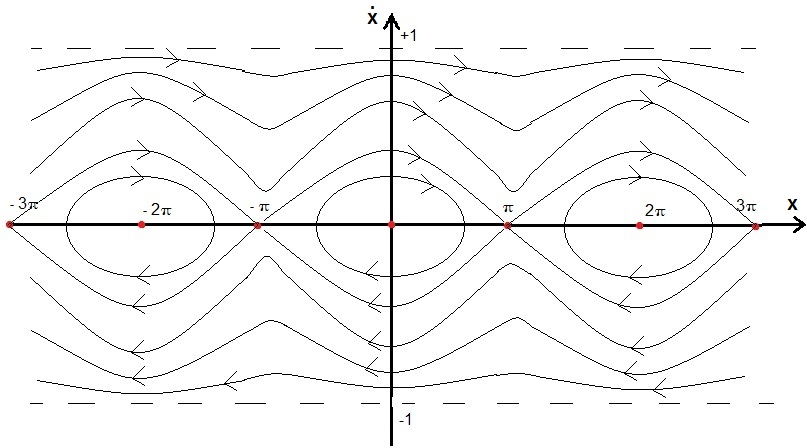}

 \caption{Phase portrait}
 \end{figure}

\noindent Now we turn to the study of the period of the periodic orbits, in particular, in order to complete theorem \ref{introma} we will study the number of $2\pi$-periodic orbits.\\
\begin{prop}
The only $2\pi$-periodic solutions of equation (\ref{pendau}) are the constant ones $(k\pi,0)$ with $k\in\mathbb{Z}$.
\end{prop}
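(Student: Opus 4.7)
The plan is to exploit the autonomous phase portrait together with a Sturm comparison in the same spirit as in the proof of Theorem \ref{introtw}, showing that every non-constant periodic orbit of (\ref{pendau}) has minimal period strictly larger than $2\pi$. Using the first integral $E$ and the phase portrait of Figure \ref{ph}, I would first rule out the non-oscillatory orbits: rotations ($E>1+2a$) have $\dot{x}$ of constant sign, so $x$ is strictly monotone and cannot be periodic in $\mathbb{R}$, while heteroclinics ($E=1+2a$) are not periodic at all. Hence, besides the equilibria $(k\pi,0)$, the only candidate non-constant $2\pi$-periodic solutions are the oscillations with $1<E<1+2a$.

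To treat these, I would pass to the Hamiltonian variables $(q,p)$ with $q=x$ and $p=\dot x/\sqrt{1-\dot x^2}$, governed by $\dot q=p/\sqrt{1+p^2}$, $\dot p=-a\sin q$. Differentiating this system with respect to $t$ shows that $\xi(t):=\dot q(t)$ satisfies the same Sturm--Liouville equation that was studied in the proof of Theorem \ref{introtw}, namely
$$\frac{d}{dt}\left((1+p^2)^{3/2}\dot\xi\right)+a\cos q\cdot\xi=0.$$
On a non-constant oscillation of minimal period $\tau$, $\xi=\dot q$ vanishes precisely at the two turning points of each period (where $p=0$), so two consecutive zeros of $\xi$ lie at distance $\tau/2$.

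The core of the argument is a Sturm comparison with $\ddot z+\frac{1}{4}z=0$, whose non-trivial solutions have consecutive zeros at distance $2\pi$. Since $(1+p^2)^{3/2}\ge 1$ and $a\cos q\le a\le \pi^2/T^2=1/4$, the Pr\"ufer angle associated with the variational equation increases no faster than the one associated with the comparison, so it requires at least $2\pi$ units of time to sweep $\pi$ radians. This yields $\tau/2\ge 2\pi$, hence $\tau\ge 4\pi>2\pi$. A non-constant $2\pi$-periodic solution would, however, force $\tau$ to divide $2\pi$, in particular $\tau\le 2\pi$, which is a contradiction. I expect the main technical point to be the borderline case $a=1/4$: one has to check that along a non-trivial oscillation the equalities $(1+p^2)^{3/2}=1$ and $a\cos q=1/4$ cannot hold on a full-measure subset of the period, and this follows because the turning points, where $p=0$, correspond to $q=\pm x_E\neq 0$, which keeps the Sturm inequality strict on a set of positive measure and preserves the strict comparison.
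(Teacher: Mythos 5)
Your argument is correct, but it takes a genuinely different route from the paper. The paper also reduces the statement to showing that every non-constant closed orbit has period strictly greater than $2\pi$, but it does so by a direct energy/phase-plane computation: parametrizing the oscillations by the energy $E$ and starting each one at its left turning point, it shows that $\partial\dot x/\partial E(\pi,E)>0$ for $E>1$; since $\dot x(\pi,\cdot)$ vanishes at the minimal energy $E=1$ (the equilibrium), it follows that $\dot x(\pi,E)>0$, i.e.\ the half-period exceeds $\pi$. You instead observe that $\xi=\dot q$ is a solution of the same Sturm--Liouville variational equation used in the proof of Theorem \ref{introtw}, that its zeros are exactly the turning points $p=0$, which are separated by half the minimal period $\tau$, and then apply Sturm comparison with $\ddot z+\tfrac14 z=0$ to conclude $\tau\geq 4\pi$. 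Your route buys two things: a sharper bound $\tau\geq 4\pi$ (which is asymptotically optimal, as linearization at the center gives period $2\pi/\sqrt{a}\geq 4\pi$), and a transparent link with the twist computation of Section 3.2; the paper's route is more self-contained within the phase-portrait analysis but rests on the (rather tersely justified) monotonicity of $\dot x(\pi,\cdot)$ in $E$. One small remark: the delicacy you anticipate in the borderline case $a=1/4$ is not actually present, because you only need $\tau/2\geq 2\pi$, hence $\tau\geq 4\pi>2\pi$, and for this the non-strict Sturm comparison suffices; no strict inequality on a set of positive measure is required.
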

\begin{proof}
We will prove that the period of every orbit (except for the constants one) is strictly greater than $2\pi$. To do so, by the symmetries of the phase portrait, it is enough to prove that for every non-constant periodic orbit, $\dot{x}(\pi,x_0,0)>0$ with $x_0<0$.\\
Let us write the solution $x(t,E)$ such that $x(0,E)=\arccos (1-E/a)$ and $\dot{x}(0,E)=0$ and compute $\frac{\partial  \dot{x}}{\partial E}(\pi,E)$. Remembering (\ref{condau}) and $a\leq1/4$ we have
$$
\frac{\partial \dot{x}}{\partial E}>0\quad\mbox{for}\quad E>1.
$$
Notice that the point $(0,0)$ is a strict minimum of $E(x_0,\dot{x}_0)$ an so
$$
\dot{x}(\pi,E(0,0))<\dot{x}(\pi,E(x_0,\dot{x}_0))\quad\forall(x_0,\dot{x}_0)\neq(0,0).
$$ 
Now, remembering that $E$ is constant on the solutions, we have that for every initial condition $(x_0,\dot{x}_0)$ such that $1<E(x_0,\dot{x}_0)<1+2a$ there exists $\hat{x}<0$ such that $E(x_0,\dot{x}_0)=E(\hat{x},0)$ and $\dot{x}(\pi,E(\hat{x},0))>\dot{x}(\pi,E(0,0))=0$.

\end{proof}

\noindent Looking for $T$-periodic solutions with winding number $N$ we can do the following. By the phase portrait analysis we got that for $E>1+2a$ the solution is unbounded and the orbit in the phase plane is the graph of a function. In this case we will show
\begin{prop}
Fix $|N|\geq 1$ and $T\geq 2\pi$ such that $\frac{2N\pi}{T}<1$. Then there exists exactly one value of the energy $E>1+2a$ such that
$$
x(T+t,E)=x(t,E)+2N\pi.
$$ 
\end{prop}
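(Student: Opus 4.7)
The plan is to reduce the problem to inverting the relativistic ``advance-time'' function $\tau(E)$. By the phase portrait analysis already performed, for every $E>1+2a$ the orbit is an unbounded monotone graph in the $(x,\dot x)$-plane with $\dot x$ of constant sign; by the $x\mapsto -x$ symmetry of equation (\ref{pendau}) it suffices to treat $N\geq 1$ and $\dot x>0$. Energy conservation (\ref{enau}) then yields
$$
\dot x=\sqrt{1-\frac{1}{u(x,E)^{2}}},\qquad u(x,E):=E+a\cos x-a\geq E-2a>1,
$$
so $x(\cdot,E)$ is a strictly increasing $C^{1}$-diffeomorphism of $\mathbb{R}$ and the relation $x(t+T,E)=x(t,E)+2N\pi$ is equivalent to the scalar equation $\tau(E)=T/N$, where
$$
\tau(E):=\int_{0}^{2\pi}\frac{dx}{\dot x}=\int_{0}^{2\pi}\frac{u(x,E)\,dx}{\sqrt{u(x,E)^{2}-1}}
$$
is the time needed for $x$ to advance by exactly $2\pi$.

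The heart of the proof will be to show that $\tau:(1+2a,+\infty)\to(2\pi,+\infty)$ is a continuous, strictly decreasing bijection. Strict monotonicity follows by differentiation under the integral, using $\partial_{E}u=1$ and $\frac{d}{du}\bigl(u/\sqrt{u^{2}-1}\bigr)=-(u^{2}-1)^{-3/2}$, which gives
$$
\tau'(E)=-\int_{0}^{2\pi}\frac{dx}{(u(x,E)^{2}-1)^{3/2}}<0
$$
for every $E>1+2a$. For the limit at infinity the integrand converges uniformly in $x$ to $1$, expressing the relativistic saturation $|\dot x|\to 1$, so $\tau(E)\to 2\pi$. The more delicate limit is $\tau(E)\to+\infty$ as $E\to(1+2a)^{+}$: at $x=\pi$ we have $u(\pi,E)=E-2a\to 1^{+}$, and the Taylor expansion $u(x,1+2a)-1\sim\tfrac{a}{2}(x-\pi)^{2}$ forces the limit integrand to have a non-integrable $|x-\pi|^{-1}$ singularity, so monotone convergence yields $\tau(E)\to+\infty$. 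This is consistent with the orbit at $E=1+2a$ being heteroclinic.

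The conclusion then follows at once from the intermediate value theorem: the hypothesis $2N\pi/T<1$ together with $N\geq 1$ gives $T/N\in(2\pi,+\infty)$, and the strict monotonicity of $\tau$ supplies a unique $E>1+2a$ with $\tau(E)=T/N$. I expect the only genuinely delicate point to be the rigorous justification of the divergence $\tau(E)\to+\infty$ near the heteroclinic energy, where one must handle with care a family of integrands with coalescing singularities at $x=\pi$; all remaining steps reduce to routine differentiation under the integral sign and elementary asymptotic analysis.
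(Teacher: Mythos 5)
Your proof is correct and follows essentially the same route as the paper: the paper defines the advance-time $T_N(E)=\int_0^{2N\pi}\bigl(1-\tfrac{1}{(E+a\cos x-a)^2}\bigr)^{-1/2}dx$, asserts it is continuous, strictly decreasing, tends to $+\infty$ as $E\to(1+2a)^+$ and to $2N\pi$ as $E\to+\infty$, and concludes by the intermediate value theorem, which is exactly your argument up to the trivial rescaling $T_N=N\tau$. The only difference is that you actually justify the asserted properties (the explicit formula for $\tau'(E)$ and the monotone-convergence treatment of the coalescing singularity at $x=\pi$), which the paper leaves to the reader.
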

\begin{proof}
Let us prove in the case $t=0, x(0)=0$ and $N>0$.\\
Remembering the energy (\ref{enau}) we can define a function $T_N(E)$ such that $x(T_N(E))=2N\pi$ (i.e. $T_N(E)$ is the time needed by a solution starting from $0$ at $t=0$ to reach $2N\pi$), namely
\begin{equation*}
T_N(E)=\int_0^{2N\pi}\frac{dx}{\sqrt{1-\frac{1}{(E+a\cos x-a)^2}}}.
\end{equation*} 
Notice that it is continuous, monotone decreasing in $E$ and 
$$
\lim_{E\to 1+2a}T_N(E)=+\infty,\quad \lim_{E\to +\infty}T_N(E)=2N\pi. 
$$ 
The proposition is proved if we can find $E>1+2a$ such that $T_N(E)=T$. It is automatic using the properties just mentioned and the fact that by hypothesis $T>2N\pi$.    

\end{proof}

\bibliographystyle{plain}
\bibliography{biblio4}

\end{document}